\newtheorem{theorem}{Theorem}[section]
\newtheorem{thmx}{Theorem}
\newtheorem{corollary}[theorem]{Corollary}
\newtheorem{lemma}[theorem]{Lemma}
\theoremstyle{definition}
\theoremstyle{definition}
\theoremstyle{definition}
\newtheorem{example}{Example}
\theoremstyle{definition}
\theoremstyle{definition}
\newtheorem{definition}{Definition}
\theoremstyle{proposition}
\newtheorem{proposition}{Proposition}[section]
\theoremstyle{proposition}
\title{Iterated Integrals in Quantitative Topology}
\author{Robin Elliott}
\begin{document}

\maketitle 

\begin{abstract}
Let $X$ be a simply connected Riemannian manifold. Until now, quantitative topology has used Sullivan's rational homotopy theory as the bridge between geometric information on $X$ and torsion-free homotopy theoretic information on $X$. In this paper we introduce Chen's iterated integrals on $\Omega X$ as a new bridge between these two areas. We give two applications: finding upper bounds for Gromov's distortion of higher homotopy groups on $X$, and also proving the non-existence of homologically nontrivial small-volume cycles in the space $\Omega_{\leq L}X$ of loops on $X$ of length at most $L$.
\end{abstract}

\section{Introduction}

This paper provides a new framework to answer questions in quantitative topology using Chen's theory of iterated integrals. In the 1970s Kuo-Tsai Chen built a de Rham complex for the loop space $\Omega X$ of a simply connected smooth manifold. Chen showed that a certain subcomplex of this de Rham complex computes the homology of $\Omega X$, the subcomplex generated by \textit{iterated integrals} on $\Omega X$. Moreover, these iterated integrals have an explicit geometric description, built out of the (usual) de Rham complex on $X$.

We put this framework to use in two independent ways. First we use it to prove the nonexistence of nontrivial cycles in $\Omega X$ that both have small volume and live in a subspace of short loops. Second, we give a new method to upper bound Gromov's distortion in rational homotopy groups of $X$. 

Unless otherwise stated, all homology and cohomology groups are with real coefficients.

\subsection{Loop spaces in quantitative topology}

The role of loop spaces in quantitative topology have been already been considered by Gromov in \cite{metricstructures}. In particular he proves

\begin{theorem} \cite[Theorem~7.3]{metricstructures}
Let $X$ be a compact simply connected Riemannian manifold. There exist constants $C > c > 0$ such that the following holds. For any $L > 0$, let $\Omega_{\leq L}X$ be the subspace of $\Omega X$ of loops of length less than at most $L$. Then the inclusion $\Omega_{\leq L} X \to \Omega X$ induces a surjection on (integer) homology up to degree $cL$. Moreover, the induced map on $H_n$ is zero for $n > CL$.
\end{theorem}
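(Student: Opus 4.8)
The natural tool here is Morse theory for the energy functional $E(\gamma)=\tfrac12\int_0^1|\dot\gamma|^2\,dt$ on the based loop space, whose sublevel set $\{E\le a\}$ deformation retracts (by reparametrising at constant speed) onto $\Omega_{\le\sqrt{2a}}X$, and whose critical points are the geodesic loops at the basepoint. The plan is to run the two halves of the statement off the classical dictionary between the Morse index of a geodesic and its length: by the Morse index theorem the index equals the number of interior conjugate points counted with multiplicity, and Milnor's theory presents $\Omega X$ as a CW complex with one cell of dimension $\lambda$ for every index-$\lambda$ geodesic loop, the set $\{E\le a\}$ being the subcomplex of cells coming from loops of length $\le\sqrt{2a}$. (For a generic basepoint, or after a small perturbation of the metric, these critical points are nondegenerate; otherwise one argues Morse--Bott.)

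For the vanishing statement I would use only an \emph{upper} bound on the index. Compactness gives a curvature bound $\sec\le\kappa_0$ for some $\kappa_0>0$, and Rauch comparison then forces consecutive conjugate points along any geodesic to lie at distance at least $\pi/\sqrt{\kappa_0}$; hence a geodesic loop of length $\ell$ carries at most $(n-1)\big(\sqrt{\kappa_0}\,\ell/\pi+1\big)$ conjugate points with multiplicity, i.e. index at most $C\ell$ for a constant $C=C(n,\kappa_0)$. Consequently $\Omega_{\le L}X$ has the homotopy type of a CW complex of dimension at most $CL$, so $H_n(\Omega_{\le L}X)=0$ for $n>CL$, and in particular the map to $H_n(\Omega X)$ vanishes there.

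The surjectivity half is the main obstacle, since it requires control in the opposite direction: that the low-degree homology of $\Omega X$ is already carried by short loops. Morse-theoretically this is the assertion that $\Omega X$ is built from $\{E\le L^2/2\}\simeq\Omega_{\le L}X$ by attaching cells of dimension $>cL$, i.e. that every geodesic loop of length $>L$ has index $>cL$; equivalently, the length filtration $F_aH_*=\mathrm{im}\big(H_*(\Omega_{\le\sqrt{2a}}X)\to H_*(\Omega X)\big)$ grows at most linearly, every degree-$n$ class being representable by loops of length $O(n)$. The route I would take is to exploit the Pontryagin product coming from loop concatenation, under which lengths add, so that $F_aH_i\cdot F_bH_j\subseteq F_{a+b}H_{i+j}$; this reduces the problem to a linear bound $\ell_\alpha\le C\,d_\alpha$ on a set of algebra generators of $H_*(\Omega X;\mathbb{R})$ of degrees $d_\alpha$, since a degree-$n$ class is a sum of products of generators of total degree $n$ and hence total length $\le Cn$. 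Verifying such a bound on the generators is exactly where simple connectivity must be used in an essential rather than incidental way, and where a naive curvature comparison is insufficient: a long geodesic loop lying in a negatively curved region need carry no conjugate points at all, and a compact nonpositively curved manifold would be aspherical by Cartan--Hadamard, hence never simply connected. The required index lower bound therefore cannot be local and must instead come from the \emph{global} focusing that simple connectivity imposes. I expect this to be the delicate step, and it is precisely the kind of effective statement that the iterated-integral methods introduced below are meant to quantify.
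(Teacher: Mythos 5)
You should first note that the paper does not prove this statement at all: it is Gromov's theorem, quoted from \cite{metricstructures} as motivation for the questions studied later, so there is no in-paper argument to compare against and your proposal has to be judged on its own. Its first half stands up: the vanishing of the map on $H_n$ for $n > CL$ via Milnor's broken-geodesic CW model, the Morse index theorem, and Rauch comparison under an upper curvature bound $\sec \le \kappa_0$ (giving index at most $(n-1)\bigl(\sqrt{\kappa_0}\,\ell/\pi + 1\bigr)$ for a geodesic loop of length $\ell$) is the standard and correct argument, and the nondegeneracy caveat you flag is routine (bumpy-metric perturbation only changes lengths by a bounded factor, or one uses Gromoll--Meyer local homology, whose degree spread is bounded by the nullity, at most $\dim X$).

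The surjectivity half, however, is where the entire content of the theorem lies, and there your proposal has genuine gaps rather than deferred routine work. First, your ``equivalently'' is not an equivalence: the assertion that every geodesic loop of length $> L$ has index $> cL$ would \emph{imply} the surjection, but surjectivity does not require it --- a low-dimensional cell attached at high energy can represent a class that is already hit from below --- and this index lower bound is a far stronger statement which, as you yourself observe, cannot come from local comparison geometry and is not known to follow from compactness plus simple connectivity; so the Morse reformulation trades the theorem for something harder. Second, the Pontryagin-product reduction does not actually reduce the problem: the theorem concerns \emph{integral} homology, and even rationally the Pontryagin algebra $H_*(\Omega X;\mathbb{Q})$ of a finite simply connected complex need not be finitely generated as an algebra (Lemaire constructed a finite complex whose rational homotopy Lie algebra, hence whose loop-space homology algebra, is not finitely generated). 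Your generating set can therefore be infinite and spread over all degrees, and the bound $\ell_\alpha \le C d_\alpha$ you require on it is not a lemma feeding the theorem --- it \emph{is} the theorem, restricted to those classes; working over $\mathbb{R}$ also cannot yield the stated integral conclusion, since real surjectivity sees no torsion. Since you explicitly defer this step, what you have proved is only the ``zero for $n > CL$'' half. The known proofs of the surjectivity half proceed by a different mechanism: one deforms an $n$-dimensional family of loops directly, approximating it by broken geodesics and using the uniform contractibility of short loops (this is where compactness and simple connectivity enter) to shorten the family inductively over the skeleta of the parameter complex, paying a bounded length per dimension, which produces a representative of suplength $O(n)$ without any control on indices of long geodesics or on ring generators.
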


That is, Gromov answers the question: which homology classes in $H_n(\Omega X)$ can be represented by cycles with $\textit{suplength}$ at most $L$, i.e. is contained in the subspace $\Omega_{\leq L}X \subset \Omega X$? Gromov shows that for $n > CL$ the answer is none of them, and for $n < cL$ the answer is all of them.

One could refine this investigation by also keeping track of the \textit{volume} of a chain $Z$ representing a given homology class $\zeta \in \Omega X$, for some notion of volume in $\Omega X$ inherited from the metric $X$.

\begin{example}
Let $n \geq$ and $X=S^n$. In this case, the (integral) homology of $\Omega S^n$ is free rank $1$ in degrees which are a nonnegative multiple of $n-1$, and vanishes otherwise. Let $k \geq 1$ and $\zeta_k \in H_{k(n-1)}(\Omega S^n; \mathbb{R})$ be the image in real homology of a generator in integral homology. For $k=1$ this class can be represented by a sweepout of $S^n$ by loops. For $k=2$ and $n$ even, $\zeta_2$ is a nonzero multiple of the Hurewicz image of the desuspension of an element of $\pi_{2n-1}(S^n)$ with nonzero Hopf invariant. For any $k$ it will be shown that there exists constants $c = c(k)>0$ such that any $k(n-1)$-cycle representing $\zeta_k$ satisfies $\text{Suplength}(Z)^k\text{Vol}(Z) > c$.
\end{example}

In this paper we give bounds of the same flavor for an arbitrary Riemannian manifold $X$ and an arbitrary nonzero $\zeta \in H_n(\Omega X)$. In the case of $X=S^n$, we show these bounds are asymptotically tight. It is not known if these bounds are tight for general $X$. 

\begin{thmx} \label{thma}
Let $\zeta \in H_n(\Omega X)$ be a nonzero homology class. Then there exists a constant $c>0$ and an integer $r > 0$ such that any cycle $Z$ representing $\zeta$ obeys the bound $$ \text{Suplength}(Z)^r \text{Vol}(Z) > c.$$
Moreover, $r$ can be taken to be the minimal nonnegative integer $k$ such that there exists a $\beta \in H^n(\Omega X)$ such that $\langle \beta,\; \zeta \rangle \neq 0$ and $\beta$ can be represented closed iterated integral with each summand built out of at most $k$ differential forms on $X$.
\end{thmx}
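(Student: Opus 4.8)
The plan is to pair $\zeta$ against a single closed iterated integral of minimal length, and to control that integral by a pointwise (comass) estimate whose growth in the loop length is governed precisely by the number of forms used to build it.

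First I would use Chen's theorem to produce the form. Since $\zeta \neq 0$ and iterated integrals compute $H^*(\Omega X)$, there is a closed iterated integral $\beta$ with $\langle \beta, \zeta\rangle \neq 0$; every such $\beta$ is a finite sum of terms, each assembled from finitely many forms on $X$, so the set of integers $k$ for which some admissible $\beta$ has all summands built from at most $k$ forms is a nonempty subset of $\mathbb{Z}_{\ge 0}$. Let $r$ be its minimum and fix a representative $\beta = \sum_i \int w_1^{(i)} \cdots w_{k_i}^{(i)}$ achieving it, so $k_i \le r$ for every $i$. When $n>0$ no nonzero closed degree-$n$ iterated integral can have length $0$, which forces $r > 0$.

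Second, and this is the computational core, I would establish the comass bound
$$\Big\| \int w_1 \cdots w_k \Big\|_\gamma \;\le\; C(n,k)\,\Big(\prod_{j=1}^k \|w_j\|_\infty\Big)\, \text{Length}(\gamma)^k$$
for an iterated integral of forms $w_j$ of degrees $p_j$ with $\sum_j (p_j - 1) = n$. Evaluating the form on unit tangent vectors $V_1, \dots, V_n$ at $\gamma$ (vector fields along $\gamma$) expands, by Chen's formula, as an integral over the time simplex $\Delta_k = \{0 \le t_1 \le \cdots \le t_k \le 1\}$ of a sum over distributions of the $V_\ell$ among the $w_j$, where each $w_j$ additionally contracts the velocity $\dot\gamma(t_j)$. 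Bounding each factor by $\|w_j\|_\infty\, |\dot\gamma(t_j)| \prod_\ell |V_\ell(t_j)|$ and using $\int_{\Delta_k} \prod_j |\dot\gamma(t_j)|\, dt = \text{Length}(\gamma)^k / k!$ yields the displayed estimate, with $C(n,k)$ absorbing the number of distributions. Summing over the terms of $\beta$ and using $k_i \le r$ gives $\|\beta\|_\gamma \le C\,\big(1 + \text{Length}(\gamma)^r\big)$. Combining this with the mass–comass inequality, any cycle $Z$ representing $\zeta$ satisfies
$$|\langle \beta, \zeta\rangle| = \Big| \int_Z \beta \Big| \le \Big( \sup_{\gamma \in \text{supp}\, Z} \|\beta\|_\gamma \Big)\, \text{Vol}(Z) \le C\,\big(1 + \text{Suplength}(Z)^r\big)\, \text{Vol}(Z).$$
Since the left-hand side is a fixed positive number, rearranging gives $\text{Suplength}(Z)^r \text{Vol}(Z) > c$ once $\text{Suplength}(Z)$ is bounded below; the required lower bound on suplength for any nontrivial $n$-cycle is exactly Gromov's vanishing of $H_n(\Omega_{\leq L}X) \to H_n(\Omega X)$ for small $L$, which forbids representatives with arbitrarily short loops and lets the constant absorb the short-loop regime.

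The hard part will be the comass estimate: pinning down Chen's pointwise formula for iterated integrals of higher-degree forms, correctly accounting for the single velocity contraction per form and the combinatorics of the remaining tangent-vector slots, and matching it to whichever metric on $\Omega X$ underlies $\text{Vol}$ and $\text{Suplength}$, so that the simplex integral genuinely returns $\text{Length}(\gamma)^k$. The minimality clause then comes for free, since the exponent in the estimate equals the length of the chosen $\beta$, which we selected to be minimal.
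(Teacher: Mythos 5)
Your proposal is correct and follows essentially the same route as the paper: Chen's de Rham theorem supplies a closed iterated integral pairing nontrivially with $\zeta$, the simplex estimate $\int_{\Delta_k}\prod_j|\dot\gamma(t_j)|\,dt=\mathrm{Length}(\gamma)^k/k!$ gives the comass bound (this is exactly the paper's Theorem \ref{thmc}), and the mass--comass inequality converts this into the lower bound on $\mathrm{Suplength}(Z)^r\,\mathrm{Vol}(Z)$. Your additional care in the short-loop regime---keeping the bound in the form $C\bigl(1+\mathrm{Length}(\gamma)^r\bigr)$ when $\beta$ has summands of length less than $r$, and then invoking Gromov's vanishing theorem to bound $\mathrm{Suplength}(Z)$ away from zero---is in fact slightly more careful than the paper's own proof, which asserts the bound $C\,\mathrm{Length}(\gamma)^r$ directly for a sum of iterated integrals of mixed lengths.
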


Informally, if we are looking for representatives of $\zeta$ in $\Omega_{\leq L}$ then the volume of such representatives are bounded below. Conversely, if $Z$ represents $\zeta \in H_*(\Omega X)$ with very small volume then $Z$ must have very large suplength.

\subsection{Iterated integrals and distortion in $\pi_n(X) \otimes \mathbb{Q}$}

Our second application of the framework is a new method to provide upper bounds to Gromov's distortion of homotopy groups. Following \cite{scalspaces}, define the \textit{distortion} of $\alpha \in \pi_n(X) \otimes \mathbb{Q}$ to be 
$$ \delta_{\alpha}(L) \coloneqq \text{sup}\{k \;|\;\text{there exists an } L \text{-Lipschitz map } S^n \to X \text{ with } [f] = k\alpha \}.$$

Gromov outlines, and Manin fleshes out, an algorithm using Sullivan's minimal models and obstruction theory to find upper bounds for $\delta_{\alpha}(L)$. Here we present a new method for obtaining bounds.

\begin{thmx} \label{thmb}
Suppose $X$ is a simply connected Riemannian manifold and $\alpha \in \pi_n(X) \otimes \mathbb{Q}$. Denote by $\tau(\alpha)$ the image of $\alpha$ under the composite of $$\pi_n(X) \otimes \mathbb{Q} \xrightarrow{\sim} \pi_{n-1}(\Omega X) \otimes \mathbb{Q} \xrightarrow{Hurewicz} H_{n-1}(\Omega X; \mathbb{Q}).$$ Suppose $\beta$ is an iterated integral on $\Omega X$ with each summand built out of at most $r$ differential forms on $X$, and also such that $\langle \beta, \tau(\alpha) \rangle \neq 0$. Then the distortion of $\alpha$ is $O(L^{n-1+r})$.
\end{thmx}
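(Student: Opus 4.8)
The plan is to exploit the loop-space adjunction to turn an $L$-Lipschitz representative of $k\alpha$ into a cycle in $\Omega X$ on which the iterated integral $\beta$ can be estimated directly. Suppose $f \colon S^n \to X$ is $L$-Lipschitz with $[f] = k\alpha$ in $\pi_n(X)\otimes\mathbb{Q}$. Writing $S^n$ as the reduced suspension of $S^{n-1}$, the adjoint of $f$ is a map $\tilde f \colon S^{n-1} \to \Omega X$ whose class in $\pi_{n-1}(\Omega X)\otimes\mathbb{Q}$ is $k$ times the image of $\alpha$ under the first isomorphism in the statement. Applying the Hurewicz homomorphism and naturality, the fundamental cycle satisfies $\tilde f_*[S^{n-1}] = k\,\tau(\alpha)$ in $H_{n-1}(\Omega X;\mathbb{Q})$. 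Since $\beta$ is a closed iterated integral (which I take to be implicit in the pairing $\langle\beta,\tau(\alpha)\rangle$), the de Rham pairing gives
\begin{equation*}
\int_{S^{n-1}} \tilde f^{*}\beta \;=\; \langle \beta,\, \tilde f_*[S^{n-1}]\rangle \;=\; k\,\langle \beta,\, \tau(\alpha)\rangle .
\end{equation*}
Because $\langle\beta,\tau(\alpha)\rangle\neq 0$, bounding $|k|$ reduces to bounding the left-hand integral in terms of $L$.

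For the estimate I would write $\beta$ as a sum of basic iterated integrals $\int \omega_1\cdots\omega_r$ with each $\omega_i$ a form on $X$ of degree $d_i$. By the degree convention for iterated integrals, such a summand has degree $\sum_i(d_i-1) = \sum_i d_i - r$ on $\Omega X$; since $\beta$ pairs nontrivially with a class in degree $n-1$, the relevant summands have $\sum_i d_i = n-1+r$. Unwinding Chen's definition, the pullback of such a summand by $\tilde f$ and its integral over $S^{n-1}$ become an integral over $S^{n-1}\times\Delta^r$, where $\Delta^r = \{0\le t_1\le\cdots\le t_r\le 1\}$, of $\Phi^{*}(\omega_1\times\cdots\times\omega_r)$, with $\Phi(x;t_1,\dots,t_r) = (F(x,t_1),\dots,F(x,t_r))$ and $F \colon S^{n-1}\times[0,1]\to X$ the map underlying $\tilde f$. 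The domain has dimension $(n-1)+r = \sum_i d_i$, matching the degree of the integrand.

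The heart of the argument is the Lipschitz estimate on this integral. After reparametrizing each loop to have constant speed -- so that its $t$-derivative is bounded by its length, hence by $O(L)$ -- the map $F$ is $O(L)$-Lipschitz in both the $S^{n-1}$ and the $[0,1]$ directions, using that the suspension coordinates on $S^n$ distort the product metric only by a bounded factor. Consequently $|d\Phi| = O(L)$, and the comass of $\Phi^{*}(\omega_1\times\cdots\times\omega_r)$ is bounded by $\bigl(\prod_i\|\omega_i\|_\infty\bigr)\,O(L)^{\sum_i d_i} = O(L^{\,n-1+r})$. Integrating this top-degree form over the fixed-volume domain $S^{n-1}\times\Delta^r$ yields $\bigl|\int_{S^{n-1}}\tilde f^{*}\beta\bigr| = O(L^{\,n-1+r})$, and dividing by $|\langle\beta,\tau(\alpha)\rangle|$ gives $|k| = O(L^{\,n-1+r})$, which is the claimed bound on $\delta_\alpha(L)$.

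I expect the main obstacle to be the comass estimate of the previous paragraph, specifically keeping the geometric bookkeeping honest: passing from the Lipschitz constant of $f$ on $S^n$ to uniform derivative bounds on $F$ over the product $S^{n-1}\times[0,1]$ requires the constant-speed reparametrization together with control of the suspension metric, and one must check that reparametrizing the loops changes neither the homology class $\tilde f_*[S^{n-1}]$ nor the value of the closed iterated integral. The remaining ingredients -- the adjunction, the Hurewicz identity, and the degree count -- are formal.
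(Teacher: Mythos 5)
Your proof is correct, and its opening moves coincide with the paper's: pass to the adjoint $\tilde f\colon S^{n-1}\to\Omega X$, use naturality of the Hurewicz map to get $\tilde f_*[S^{n-1}]=k\,\tau(\alpha)$, and bound the resulting pairing. Indeed your cycle is literally the paper's $\tau(f)=(\Omega f)_*\zeta_n$, since $\tilde f=(\Omega f)\circ\eta$ and $\zeta_n=\eta_*[S^{n-1}]$. Where you genuinely diverge is in how the estimate is carried out. The paper never leaves the loop space: it first proves the pointwise bound $\|(\smallint\omega_1\cdots\omega_r)(\gamma)\|_\infty\le\tfrac{1}{r!}\,\mathrm{Length}(\gamma)^r\prod_i\|\omega_i\|_\infty$ at every $\gamma\in\Omega X$ (Theorem \ref{thmc}, proved via the $k$-dilation formalism for plots of metric differential spaces), and then combines this with two facts: $\Omega f$ is $L$-Lipschitz for the sup metric, so $(\Omega f)_*\zeta_n$ has volume $\lesssim L^{n-1}$ and suplength $\lesssim L$; and an integration pairing is at most volume times sup-norm. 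You instead unwind the fiber-integration definition so that the entire pairing becomes a single finite-dimensional integral over $S^{n-1}\times\Delta^r$, bounded by a comass estimate there---exactly the style of the paper's Section 2 computation of the Hopf invariant. Your route is more elementary: for Theorem \ref{thmb} it avoids defining norms of forms on infinite-dimensional differential spaces altogether, and all analysis happens on a compact finite-dimensional domain. What the paper's detour through Theorem \ref{thmc} buys is reusability: Theorem \ref{thma} concerns an arbitrary cycle $Z$ in $\Omega X$, which need not be the pushforward of a finite-dimensional manifold under any adjoint map, so there one genuinely needs a bound valid at every point of $\Omega X$ and every plot, and your finite-dimensional reduction has no analogue in that setting. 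Two small points to tidy in your write-up: a summand of $\beta$ of length $s<r$ contributes only $O(L^{n-1+s})$, which is absorbed into $O(L^{n-1+r})$ for $L\ge 1$; and $f$ should be smoothed (at the cost of Lipschitz constant $L+\varepsilon$) so that $\tilde f$ is a plot and the pullbacks are defined. The constant-speed reparametrization that you flag as the main obstacle is actually unnecessary: taking the paper's fixed smooth sweepout $\eta$ as suspension coordinates, the quotient map $S^{n-1}\times[0,1]\to S^n$ is $O(1)$-Lipschitz, so $F=f\circ q$ is automatically $O(L)$-Lipschitz in both variables and the loops $t\mapsto F(x,t)$ have speed $O(L)$.
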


\begin{example} Consider the space $X = S_a^3 \vee S_b^3 \cup_{[a,[a,b]]} D^8 \cup_{[b,[a,b]]} D^8$ considered in \cite[\S 13(d),(e)]{fht} and \cite[Section~5]{shadows}. The rational homotopy group $\pi_{10}(X) \otimes \mathbb{Q}$ is rank one; let $\tau$ be a generator. Manin follows the method of obstruction theory to give an upper bound on the distortion of $\tau$ of $O(L^{12})$, and then refines this to an upper bound of $O(L^{11})$. In Section 5, we give another argument that the distortion of $\tau$ is $O(L^{11})$, avoiding the obstruction theory.
\end{example}

In broad terms, the obstruction theory of \cite{shadows} is replaced with the method of \textit{weight reduction}, as explained in \cite{kozsulduality}, \cite{sinhahopfinvs}. Section 5 contains examples of such computations. 

Theorem 1.2 and Theorem 1.3 both follow from the main theorem, which is proved in Section 4.

\begin{thmx}\label{thmc}
Let $X$ be a simply connected compact Riemannian manifold and $\beta \in H^n(\Omega X; \mathbb{R})$. Then there exists a differential form $\omega$ on $\Omega X$ representing $\beta$ which admits a positive integer $r$ and a $C > 0$ such that for all $\gamma \in \Omega X$,
$$ ||\omega(\gamma)||_{\infty} \leq C\text{Length}(\gamma)^r. $$
\end{thmx}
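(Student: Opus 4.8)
The plan is to reduce the statement to a pointwise estimate on a single Chen iterated integral, and then to bound that integral by a power of the length using the simplicial description of iterated integrals. By Chen's theorem the real cohomology of $\Omega X$ is computed by the subcomplex of closed iterated integrals, so I may choose a representative $\omega = \sum_j \int \alpha_1^j \cdots \alpha_{k_j}^j$ of $\beta$ that is a finite sum of iterated integrals of differential forms on $X$. It then suffices to bound the comass of each summand and to set $r = \max_j k_j$. Throughout I equip $T_\gamma \Omega X$ --- the space of vector fields $V$ along $\gamma$ --- with the sup-norm $\|V\| = \sup_{t \in [0,1]} |V(t)|_g$, and take $\|\omega(\gamma)\|_\infty$ to be the associated comass.

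For a single iterated integral $I = \int \alpha_1 \cdots \alpha_k$ with $\deg \alpha_i = d_i$, I recall its definition as a fiber integral. Let $\Delta^k = \{0 \le t_1 \le \cdots \le t_k \le 1\}$ and let $\mathrm{ev}\colon \Delta^k \times \Omega X \to X^k$ send $(t_1,\dots,t_k,\gamma) \mapsto (\gamma(t_1),\dots,\gamma(t_k))$; then $I = \int_{\Delta^k} \mathrm{ev}^*(\alpha_1 \times \cdots \times \alpha_k)$, a form of degree $m = \sum_i d_i - k$. To evaluate $I(\gamma)$ on tangent vectors $V_1,\dots,V_m$ I push forward under $\mathrm{ev}$: the simplicial direction $\partial_{t_i}$ maps to the velocity $\dot\gamma(t_i)$ in the $i$-th factor of $X^k$, while $V_\ell$ maps to $(V_\ell(t_1),\dots,V_\ell(t_k))$. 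By the product structure the $i$-th form $\alpha_i$ is evaluated at $\gamma(t_i)$ on $\dot\gamma(t_i)$ together with $d_i - 1$ of the vectors $V_\ell(t_i)$. The crucial observation is that each simplex coordinate $t_i$ feeds exactly one velocity vector $\dot\gamma(t_i)$ into exactly one form, since $\partial_{t_i}$ has vanishing image in every factor but the $i$-th.

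Using compactness of $X$ to bound each comass $\|\alpha_i\|_\infty$, and using $|V_\ell(t_i)| \le \|V_\ell\| \le 1$, each factor $\alpha_i$ contributes at most $\|\alpha_i\|_\infty\, |\dot\gamma(t_i)|$. Summing over the finitely many ways to distribute the $V_\ell$ among the factors yields a constant $C'$ depending only on the $\alpha_i$, $k$, and $m$, so that the integrand over $\Delta^k$ is bounded by $C' \prod_{i} |\dot\gamma(t_i)|$. The simplex integral then factorizes: $\int_{\Delta^k} \prod_i |\dot\gamma(t_i)| \, dt = \tfrac{1}{k!}\left(\int_0^1 |\dot\gamma(t)| \, dt\right)^k = \tfrac{1}{k!}\,\text{Length}(\gamma)^k$. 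Hence $\|I(\gamma)\|_\infty \le \tfrac{C'}{k!}\,\text{Length}(\gamma)^k$, and summing over the finitely many summands of $\omega$ gives the desired bound with $r = \max_j k_j$ and a single constant $C$.

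The main obstacle is the bookkeeping in the middle step: carefully pushing tangent vectors through $\mathrm{ev}$ and verifying that each $t_i$ contributes precisely one velocity factor, so that the $\Delta^k$-integral factorizes into a product of copies of $\int_0^1 |\dot\gamma|$. Once this is established the length power emerges automatically, and the resulting exponent $r$ --- the maximal number of forms appearing in any summand of the representative --- is exactly the integer appearing in the refined statements of Theorems~\ref{thma} and \ref{thmb}.
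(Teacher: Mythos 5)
Your proposal follows the same route as the paper's proof: represent $\beta$ by a finite sum of Chen iterated integrals via Chen's de Rham theorem, bound each summand $\smallint \alpha_1\cdots\alpha_k$ by pushing vectors through the evaluation map --- with each simplex coordinate $t_i$ feeding exactly one velocity $\dot\gamma(t_i)$ into exactly one factor --- and then factorize the simplex integral as $\int_{\Delta^k}\prod_i|\dot\gamma(t_i)|\,dt = \tfrac{1}{k!}\text{Length}(\gamma)^k$. This is precisely the computation in the paper's Section 4, including the $\tfrac{1}{k!}$ and the identification of $r$ with the maximal length of a summand.

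The one point of genuine divergence is the meaning of $\|\omega(\gamma)\|_{\infty}$, and it is where your argument, spliced literally into the paper, would leave a gap. In the paper a form on $\Omega X$ exists only through its pullbacks along plots, so the norm is defined plotwise: $\|\omega(\gamma)\|_{\infty} = \sup_{\phi}\,\|\omega_{\phi}(y)\|_{\infty}/\text{Dil}_k(\phi)$, the supremum over plots $\phi$ through $\gamma$, normalized by the $k$-dilation of $\phi$ measured in the sup-metric. Correspondingly, the paper's proof bounds the pullback along an arbitrary plot by $\text{Dil}_k(\phi)\cdot\tfrac{1}{r!}\text{Length}(\gamma)^r\prod_i\|\omega_i\|_{\infty}$, using that the $k$-volume spanned by the pushforwards of unit vectors is controlled by $\text{Dil}_k(\phi)$. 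Your step $|V_\ell(t_i)|\le\|V_\ell\|\le 1$ is instead a Hadamard-type bound by the product of individual norms, i.e.\ effectively by a product of $1$-dilations; since $\text{Dil}_k(\phi)$ can be far smaller than such a product (plots degenerate in some directions), your pointwise inequality does not dominate the plot-based norm, so it is formally weaker than the theorem as the paper interprets it. This is not cosmetic: the $k$-dilation-normalized norm is exactly what makes the pairing inequality $|\langle\omega,Z\rangle|\le \text{Vol}(Z)\cdot\sup\|\omega\|_{\infty}$ (the paper's Lemma 3.2, with $\text{Vol}$ the Hausdorff measure) valid, and that lemma is what Theorems \ref{thma} and \ref{thmb} consume. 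The fix is to run your identical computation plotwise, replacing the product-of-norms step by the bound (evaluation) $\le$ (comass of $\omega_1\times\cdots\times\omega_r$) $\times$ ($k$-volume spanned by the pushed-forward frame) $\times\prod_j|\dot\gamma(t_j)|$, and then bounding that $k$-volume by $\text{Dil}_k(\phi)$.
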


\subsection{Outline of the paper}

The outline of the paper is as follows. In Section 2 we give an expository introduction to iterated integrals in the setting of the loop space of $S^n$. In Section 3 we present the necessary background; the precise definitions of iterated integrals on loop spaces and the metric data that will be kept track of. Section 4 contains the proof of Theorem \ref{thmc} and from this deduces Theorems \ref{thmb} and \ref{thma}. In Section 5 we present examples and computations using the bar complex of a minimal model. In Section 6 the sharpness of the bounds given by Theorem \ref{thmc} are discussed.

\subsection{Acknowledgements}
I would like to thank my advisor, Larry Guth, for tirelessly helping me as my advisor. I would like to also thank Fedya Manin for support, conversations and guidance, as well as giving me opportunities to talk about this work. I would like to thank Dev Sinha for expositing similar ideas in the Poincare dual setting of submanifolds instead of forms. Finally, I would like to thank Luis Kumandari and Sasha Berdnikov for helpful conversations.

\section{Warmup: homotopy functionals on $S^n$}

In this section we give an expository introduction to differential forms on the loop space, saving the groundwork of setting up the machinery to the next section.

\subsection{The degree of maps $S^n \to S^n$}

The simplest example of a homotopy functional is the degree of a map $S^n \to S^n$. This is detected by a volume form $\omega$ on $S^n$: given $f:S^n \to S^n$, $\text{deg}(f) = \int_{S^n}f^*\omega$. In fact pulling back the volume form detects volume \textit{locally}: for any $f: P \to S^n$ with $P$ an $n$-dimensional manifold, $\text{Vol}(f) = \int_P f^*\omega$. 

In particular, if $P = [0,1] \times Q$, with $f: [0,1] \times Q \to S^n$ sending $\{0,1\} \times Q$ to the basepoint $x_0 \in S^n$, then $f$ desuspends (under the suspension-loop adjunction) $\hat{f}: Q \to \Omega S^n$ in the following way. Fix once and for all smooth sweepouts of spheres $S^n$ by loops, i.e. unit maps $\eta: S^{n-1} \to \Omega S^n$ of the suspension-loop adjunction. Then we can take $\hat{f} \coloneqq (\Omega f) \circ \eta$. We will define an $(n-1)$-form $\smallint \omega$ on $\Omega S^n$ such that pulling back $\smallint \omega$ by $\hat{f}$ detects the volume of $f$.

Let $\gamma \in \Omega^n$ and $V_1, \dots, V_{n-1}$ be vector fields along $\gamma$ (which should be thought of as tangent vectors in $T_{\gamma}\Omega S^n$. Then define
the $(n-1)$-form $\smallint \omega$ by
$$(\smallint \omega)(\gamma)(V_1, \dots, V_{n-1}) \coloneqq \int_{t=0}^{t=1} \omega(\gamma(t))(\gamma'(t), V_1(t), \dots, V_{n-1}(t))$$

Then $\int_{Q} \hat{f}^*(\smallint \omega)) = \int_{Q\times I} f^* \omega =  \int_P f^*\omega  = \text{Vol}(f)$. This proves the following proposition.

\begin{proposition}
Let $\omega$ be a volume form on $S^n$. Then the $(n-1)$-form $\int \omega$ on $\Omega S^n$ detects the degree of a smooth map $f: S^n \rightarrow S^n$. That is, if $\hat{f}: S^{n-1} \rightarrow \Omega S^n$ is the desuspension of $f$ under the suspension-loop adjunction then
\begin{align*} \text{deg}(f) = \langle \hat{f}^*(\smallint \omega), \; [S^{n-1}] \rangle.
\end{align*}
\end{proposition}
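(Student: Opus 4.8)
The plan is to verify the chain of equalities asserted just before the proposition, namely
$$\deg(f)=\int_{S^n}f^*\omega=\int_{Q\times I}f^*\omega=\int_Q \hat f^*(\smallint\omega),$$
where $Q=S^{n-1}$ and $P=[0,1]\times Q$. The first equality is the classical degree formula: since $\omega$ is a volume form on $S^n$ (normalized so that $\int_{S^n}\omega=1$, or else dividing through by its total mass), the integral of the pullback $f^*\omega$ computes the degree of $f$. The second equality is merely a change of notation once we arrange $f$ as a map on the cylinder $P=I\times Q$ collapsing $\{0,1\}\times Q$ to the basepoint, which is exactly the data of the desuspension $\hat f$. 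So the heart of the proof is establishing the \emph{last} equality, the Fubini-type identity $\int_Q\hat f^*(\smallint\omega)=\int_{Q\times I}f^*\omega$, which is precisely the defining property of the operation $\smallint$.

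First I would unwind the definition of the form $\smallint\omega$ to compute the pullback $\hat f^*(\smallint\omega)$ at a point $q\in Q$ applied to tangent vectors $W_1,\dots,W_{n-1}\in T_qQ$. By naturality of pullback and the definition, this equals
$$(\smallint\omega)\bigl(\hat f(q)\bigr)\bigl(d\hat f(W_1),\dots,d\hat f(W_{n-1})\bigr)=\int_{t=0}^{1}\omega\bigl(f(t,q)\bigr)\bigl(\partial_t f,\;d\hat f(W_1)(t),\dots,d\hat f(W_{n-1})(t)\bigr)\,dt,$$
where I use that the loop $\hat f(q)$ is the path $t\mapsto f(t,q)$ so that its velocity $\gamma'(t)$ is $\partial_t f(t,q)$, and that a tangent vector $W_i$ pushes forward to the variation field $d\hat f(W_i)(t)=df(0,W_i)$ evaluated along the loop. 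The key observation is that the integrand is exactly the evaluation of the pulled-back form $f^*\omega$ on the cylinder against the frame $(\partial_t, W_1,\dots,W_{n-1})$ of $T_{(t,q)}(I\times Q)$.

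Next I would integrate this identity over $Q$. Integrating $\hat f^*(\smallint\omega)$ over $Q$ and then recognizing the inner $t$-integral together with the outer $Q$-integral as a single integral over the product $I\times Q$ is precisely an application of Fubini's theorem, together with the observation that $\{dt\}\wedge\{$basis of $T^*Q\}$ gives the product orientation/volume on $I\times Q$. This yields $\int_Q\hat f^*(\smallint\omega)=\int_{I\times Q}f^*\omega$, completing the chain. Pairing with the fundamental class $[S^{n-1}]=[Q]$ then just re-expresses $\int_Q$ as the evaluation $\langle\,\cdot\,,[S^{n-1}]\rangle$, giving the stated formula.

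The step I expect to require the most care is the bookkeeping in the second paragraph: correctly identifying the pushforward $d\hat f(W_i)$ of a tangent vector under the adjunction as the variation vector field along the loop, and checking that the frame $(\partial_t,W_1,\dots,W_{n-1})$ is compatible with the product orientation so that no sign is lost. The analytic content (Fubini, smoothness of $f$ so the integrals converge and the interchange is valid) is routine given compactness of $S^n$, so the genuine obstacle is purely the differential-geometric translation between the loop-space tangent data and the cylinder tangent data — but this is exactly the computation already carried out informally in the paragraph preceding the proposition, so the proof is essentially a matter of recording it cleanly.
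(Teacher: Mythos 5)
Your proposal is correct and takes essentially the same approach as the paper: the paper's proof is exactly the chain of equalities $\int_Q \hat{f}^*(\smallint\omega) = \int_{Q\times I} f^*\omega = \int_P f^*\omega = \deg(f)$ stated immediately before the proposition, justified by unwinding the vector-field definition of $\smallint\omega$ along $\hat{f}$ and applying Fubini. Your write-up simply records explicitly the bookkeeping (identifying $d\hat{f}(W_i)$ with the variation field, matching the frame $(\partial_t, W_1,\dots,W_{n-1})$ to the product orientation, and the normalization $\int_{S^n}\omega = 1$) that the paper leaves implicit.
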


Before proceeding, we give an equivalent definition of $\smallint \omega$. This definition will be better suited to generalise to other more complicated homotopy functionals. Let $I \coloneqq [0,1]$ and $\text{ev}: \Omega S^n \times I \to S^n$ be the map $(\gamma, t) \mapsto \gamma(t)$. The volume form $\omega$ on $S^n$ pulls back to an $n$-form $\text{ev}^*\omega$ on $\Omega S^n \times I$. Integrating this over the fiber of the projection $\Omega S^n \times I \to \Omega S^n$ gives an $(n-1)$-form which is equal to $\smallint \omega$.

\subsection{The Hopf invariant of maps $S^{2n-1} \to S^n$}

The next simplest example of a homotopy functional is the Hopf invariant $\pi_{2n-1}(S^n) \to \mathbb{Z}$ for $n$ even. We will define a differential form of degree $(2n-2)$ on $\Omega S^n$ detecting this. 

Let $\Delta^2 \subset I \times I$ be the set of $(t_1, t_2)$ such that $t_1 \leq t_2$. There is a map $\text{ev}_2: \Omega S^n \times \Delta^2 \to S^n \times S^n$ sending $(\gamma, (t_1, t_2))$ to $(\gamma(t_1), \gamma(t_2))$. The volume form $\omega \times \omega$ on $S^n \times S^n$ pulls back to a $(2n)$-form $\text{ev}^*(\omega \times \omega)$ on $\Omega S^n \times \Delta^2$. Integrating this over the fiber of the projection $\Omega S^n \times \Delta^2 \to \Omega S^n$ gives a $(2n-2)$-form on $\Omega S^n$ which we will denote $\smallint \omega \omega$.

\begin{proposition}
Let $\omega$ be a volume form on $S^n$ with $n$ even. Then the $(2n-2)$-form $\int \omega \omega$ on $\Omega S^n$ detects the Hopf invariant of a smooth map $f: S^{2n-1} \rightarrow S^n$. That is, if $\hat{f}: S^{2n-2} \rightarrow \Omega S^n$ is the adjoint of $f$ under the suspension-loop adjunction then
\[ \text{Hopf}(f) = \langle \; \hat{f}^*(\smallint \omega \omega), \; [S^{2n-2}] \; \rangle \]
\end{proposition}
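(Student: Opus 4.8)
The plan is to reduce the claim to the classical Whitehead integral formula for the Hopf invariant. Recall that formula: normalizing $\omega$ so that $\int_{S^n}\omega = 1$ (the normalization already implicit in the degree proposition), the form $f^*\omega$ is closed on $S^{2n-1}$ and, since $H^n(S^{2n-1}) = 0$ for $n \geq 2$, exact, so we may write $f^*\omega = d\eta$; then $\text{Hopf}(f) = \int_{S^{2n-1}}\eta\wedge f^*\omega$, a quantity independent of the chosen primitive $\eta$. The whole proof amounts to recognizing $\langle \hat f^*(\smallint\omega\omega),[S^{2n-2}]\rangle$ as this integral.

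First I would unwind the left-hand side using naturality of fibre integration under base change. Write $S^{2n-1}$ as the reduced suspension $(S^{2n-2}\times I)/{\sim}$ with quotient map $q$ collapsing $S^{2n-2}\times\{0,1\}$ and the basepoint arc to $x_0$; then the adjoint is $\hat f(x)(t) = F(x,t)$ where $F = f\circ q \colon S^{2n-2}\times I\to S^n$. Since $\hat f\times\text{id}_{\Delta^2}$ exhibits $S^{2n-2}\times\Delta^2\to S^{2n-2}$ as the pullback of the trivial bundle $\Omega S^n\times\Delta^2\to\Omega S^n$ along $\hat f$, fibre integration commutes with this pullback, and because $\text{ev}_2\circ(\hat f\times\text{id}_{\Delta^2})(x,t_1,t_2) = (F(x,t_1),F(x,t_2))$ we obtain
$$\langle\hat f^*(\smallint\omega\omega),[S^{2n-2}]\rangle = \int_{S^{2n-2}\times\Delta^2}F_1^*\omega\wedge F_2^*\omega,$$
where $F_i(x,t_1,t_2) := F(x,t_i)$ and $\Delta^2 = \{0\le t_1\le t_2\le 1\}$.

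Next I would integrate out the inner variable $t_1$. The projection $p\colon(x,t_1,t_2)\mapsto(x,t_2)$ has fibre $[0,t_2]$, and since $F_2^*\omega = p^*(F^*\omega)$ the projection formula reduces the integrand to $p_*(F_1^*\omega)\wedge F^*\omega$ up to sign. The fibre integral $\tilde\eta := \pm\,p_*(F_1^*\omega)$ is precisely the left-primitive produced by the homotopy operator, $\tilde\eta(x,t_2)=\int_0^{t_2}(\,\cdot\,)\,dt_1$, and the standard Stokes computation gives $d\tilde\eta = F^*\omega$, the boundary contribution at $t_1=0$ vanishing because $F$ is constant on $S^{2n-2}\times\{0\}$ so that $F^*\omega$ restricts to zero there. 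Thus the simplex integral equals $\int_{S^{2n-2}\times I}\tilde\eta\wedge F^*\omega$. To identify this with $\text{Hopf}(f)$, note that $q^*\eta$ is another primitive of $F^*\omega = q^*f^*\omega$, and $\int_{S^{2n-2}\times I}q^*\eta\wedge F^*\omega = \int_{S^{2n-1}}\eta\wedge f^*\omega = \text{Hopf}(f)$ since $q$ has degree one. The difference $\tilde\eta - q^*\eta$ is a closed $(n-1)$-form on $S^{2n-2}\times I$, hence exact since $H^{n-1}(S^{2n-2})=0$; wedging with the closed form $F^*\omega$ and applying Stokes kills its contribution, the boundary terms again dying because $F^*\omega$ vanishes on $S^{2n-2}\times\{0,1\}$.

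The main obstacle is the sign bookkeeping, and this is exactly where the hypothesis that $n$ is even is used. The fibre-integration and projection-formula conventions, together with the graded-commutativity factor $(-1)^{n^2}=(-1)^n$ governing the behaviour of $F_1^*\omega\wedge F_2^*\omega$ under the swap $t_1\leftrightarrow t_2$, must conspire to produce $+\text{Hopf}(f)$ rather than a cancelling term. The parity of $n$ is precisely what makes the construction nonvanishing: for $n$ odd one has $\eta\wedge d\eta = \tfrac12\,d(\eta\wedge\eta)$, which integrates to zero over the closed manifold $S^{2n-1}$, mirroring the antisymmetry of the iterated integrand, whereas for $n$ even this exactness fails and the self-linking term survives. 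I would therefore finish by fixing the fibre-integration orientation convention once and for all and verifying directly that it yields the sign $+1$ in the even case.
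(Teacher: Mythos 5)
Your argument is correct, but after the shared first step it follows a genuinely different route from the paper's. Both proofs begin identically: naturality of fibre integration under the base change along $\hat f$ rewrites the pairing as $\int_{S^{2n-2}\times\Delta^2}F_1^*\omega\wedge F_2^*\omega$. The paper then argues geometrically, in the Poincar\'e dual picture: it caps the image of $S^{2n-2}\times\partial\Delta^2$ (which lies in the zero-volume subsets $(S^n,*)\cup(*,S^n)$ and the diagonal) by zero-volume chains, interprets the resulting degree as the signed count of $g^{-1}(p,q)$ for a regular value of $g=\mathrm{ev}_2\circ(\hat f\times\mathrm{id})$, and identifies that count with the linking number of the preimage submanifolds $P$ and $Q$ --- i.e.\ with the linking-number definition of $\mathrm{Hopf}(f)$. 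You instead argue analytically: integrating out $t_1$ produces the homotopy-operator primitive $\tilde\eta$ of $F^*\omega$, and your primitive-comparison step (using $H^{n-1}(S^{2n-2}\times I)=0$ and the vanishing of $F^*\omega$ on the ends, both of which are correct) identifies the simplex integral with Whitehead's integral $\int_{S^{2n-1}}\eta\wedge f^*\omega$. The trade-off is clear: your route is shorter and purely de Rham-theoretic, but it takes the Whitehead integral formula as the characterization of the Hopf invariant, importing as a black box precisely the equivalence (integral formula equals linking number) that the paper's intersection-theoretic argument establishes directly in this setting. Two smaller remarks: both proofs leave orientation and sign conventions implicit, so your deferred sign check is no worse than what the paper does; however, your claim that evenness of $n$ is what makes the signs work is better stated as the observation that for odd $n$ the identity is vacuous (the rational Hopf invariant vanishes there, since $\eta\wedge d\eta$ is exact) --- your computation yields $\pm\mathrm{Hopf}(f)$ for either parity, and pinning the sign to $+1$ is a convention check independent of the parity of $n$.
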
 

\begin{proof} Consider the commutative diagram

\[\begin{tikzcd}
S^{2n-2} \times \Delta^2 \arrow[r, "\hat{f}\times \text{id}"] \arrow[d, "\pi"]
& \Omega S^n \times \Delta^2 \arrow[d, "\pi"] \arrow[r, "\text{ev}_2"] &  S^n \times S^n \\
S^{2n-2} \arrow[r, "\hat{f}"]
& \Omega S^n
\end{tikzcd}\]

Here $\pi$ is the projection onto the first factor inducing a covariant integration over the fiber map $\pi_*$ sending $k$-forms to $(k-2)$-forms. By definition the iterated integral $\smallint \omega \omega = \pi_* \text{ev}_2^* (\omega \times \omega)$ and by commutativity of the diagram we have
\[ \hat{f}^*(\smallint \omega \omega) = \pi_* (\hat{f} \times \text{id})^* \text{ev}_2^* (\omega \times \omega). \]

For ease of notation, denote $\text{ev}_2 \circ (\hat{f} \times \text{id}): S^{2n-2} \times \Delta^2 \to S^n \times S^n$ by $g$. This is the map $(x, s, t) \mapsto (f'(x, s), f'(x, t))$ where $f': S^{2n-2} \times I \to S^{2n-1}$ is $f$ composed with the quotient map $S^{2n-2} \times I \to S^{2n-1}$.

For any $2n$-form $\alpha$ on $S^{2n-2}\times \Delta^2$,
\[ \langle [S^{2n-2}], \pi_*\alpha \rangle = \int_{S^{2n-2}} \int_{\Delta^2} \alpha = \int_{S^{2n-2} \times \Delta^2} \alpha \]

So it remains to show that $\text{Hopf}(f) = \int_{S^{2n-2}\times\Delta^2} g^*(\omega \times \omega)$. We will argue this in the Poincar\'e dual setting, i.e. consider the preimage under $g$ of a regular point $(p, q) \in S^n \times S^n$ and show that its signed count is equal to the linking number definition of the Hopf invariant of $f$.

Consider a chain $Z$ representing the fundamental class of $S^{2n-2} \times {\Delta^2}$, with $\partial Z = S^{2n-2} \times \partial \Delta^2$. Then since $\Delta^2 \coloneqq \{0 \leq s \leq t \leq 1\}$, the boundary $\partial \Delta^2$ splits into the sum $B + D$, where $B = \{s = 0\} \cup \{t = 1\}$ and $D = \{s = t\}$. Both $g_*(S^{2n-2} \times B)$ and $g_*(S^{2n-2} \times D)$ are $(2n-1)$-cycles lying in $n$-dimensional subspaces of $S^n \times S^n$; they lie in $(S^n, *) \cup (*, S^n)$ and $\text{Diag}(S^n)$ respectively. So they can both be bounded by $2n$ chains of zero volume, call them $Z_B$ and $Z_D$ respectively. Then $Z + Z_B + Z_D$ is a $2n$-cycle in $S^n \times S^n$. We will now show that it is homologus to $\text{Hopf}(f)$ times the fundamental class of $S^n \times S^n$.

Pick a regular point $(p, q) \in S^n \times S^n$ with neither $p$ nor $q$ the basepoint. Then $Q \coloneqq (g|_{S^{2n-2}\times D})^{-1}(S^n \times \{q\})$ and $P \coloneqq (g|_{S^{2n-2}\times D})^{-1}(\{p\} \times S^n)$ are two $(n-1)$-manifolds in $S^{2n-2} \times D \cong S^{2n-2} \times (0, 1)$. We will show that the linking number of $P$ and $Q$ is equal to the signed count of $g^{-1}(p, q)$, which in turn is equal to the degree of our $(2n)$-cycle $Z + Z_B + Z_D$.

Consider $\tilde{P} \coloneqq g^{-1}(\{p\} \times S^n)$ and $\tilde{Q} \coloneqq g^{-1}(S^n \times \{q\})$, submanifolds of $S^{2n-2} \times \Delta^2$. Note that $\tilde{P} \cap \tilde{Q} = g^{-1}(p,q)$. Under the projection $\pi: \Delta^2 \to D$ given by $(s, t) \mapsto (t, t)$, $\tilde{P}$ maps to a submanifold $\partial^{-1}P$ of $S^{2n-2}\times D$ bounding $P$, and $\tilde{Q}$ maps to $Q$. So the linking number of $P$ and $Q$ (the intersection number of $\partial^{-1}P$ and $Q$) is equal to the intersection number of $\tilde{P}$ and $\tilde{Q}$ which is equal to the signed count count of $ g^{-1}(p,q)$ as required.
\end{proof}

\section{Setup}

\subsection{Iterated integrals}

In this we will give the necessary background for and definition of iterated integrals, following Hain's thesis \cite{hainthesis} and \cite{itintreview}.

\begin{definition} A \textit{differential space} is a Hausdorff space $X$ together with a family $\{\phi_i: U_i \rightarrow X\}_{i \in I}$ of continuous maps, called \textit{plots}, subject to the following conditions. First, the $U$ are convex subsets of Euclidean space (of any dimension $n \geq 0$) with nonempty interior. Secondly, every map $\{*\} \rightarrow X$ is a plot. Thirdly, if $f: U \rightarrow U'$ is smooth in the classical sense and $\phi: U' \rightarrow X$ is a plot, then $\phi f: U \rightarrow X$ is also a plot.
\end{definition}

Hence the plots should be thought of as determining which functions into $X$ are smooth. More precisely, a function $f: X \rightarrow Y$ is said to be \textit{smooth} if and only if $f$ pushes forward all plots on $X$ to plots on $Y$. Note that the notion of plots is more general than the notion of charts on (finite dimensional) manifolds, since they not required to be local homeomorphisms. 

A differential form on a differential space $X$ is then specified by its pullbacks on all plots. Formally: \begin{definition}A \textit{differential $k$-form} $\omega$ on a differential space $X$ is the assignment of a $k$-form $\omega_{\alpha}$ on $U$ for each plot $\alpha: U \rightarrow X$ which are compatible in the following sense. If $\phi: U \rightarrow U'$ is smooth and $\alpha: U' \rightarrow X$ is a plot, then $\phi^*\omega_{\alpha} = \omega_{\alpha\phi}$. One can think of the $\omega_{\alpha}$ as the pullbacks of the form $\omega$ along the plot $\alpha$. The usual definitions of addition, wedge product and exterior derivative on forms allow us to define the \textit{de Rham complex} of a differential space $X$; it is a graded differential algebra denoted $\Lambda^*X$.
\end{definition}

Note that a smooth manifold $X$ is a differential space, where a map $\alpha: U \rightarrow X$ is a plot if and only if it is smooth. Then the de Rham complex of $X$ in the sense of Chen is isomorphic to the classical de Rham complex.

If $X$ is a differential space with basepoint $x_0$, then let $\Omega X$ denote the set of functions $\gamma: [0,1] \rightarrow X$ which are piecewise plots into $X$ and such that $\gamma(0) = \gamma(1) = x_0$. Then $\Omega X$ has the structure of a differential space generated by the following maps: $\phi: U \to \Omega X$ which admit a partition $0 = t_0 < t_1 < \dots < t_m = 1$ of $[0,1]$ such that $\hat{\phi}|_{U \times [t_{j-1}, t_j]} :U \times [t_{j-1}, t_j] \to X$ is a plot on $X$ for each $j$.

Furthermore, subspaces of differential spaces are also differential spaces, as are the product of two differential spaces is again a differential space \cite[Chapter~4.4~(c),(d)]{hainthesis}.

\begin{definition}
Let $\Delta^n$ denote the $n$-simplex
$$ \Delta^n \coloneqq \{(t_1, t_2, \dots, t_n) \subset \mathbb{R}^n \; | \; 0 \leq t_1 \leq \dots \leq t_n \leq 1 \}.$$
Then the \textit{evaluation map} is a smooth map between differential spaces:
\begin{align*}
\text{ev}_n: \Omega X \times \Delta^n & \to X^{\times n} \\ (\gamma, (t_1, \dots, t_n)) & \mapsto (\gamma(t_1), \dots, \gamma(t_n))
\end{align*}
\end{definition}

\begin{definition}
Let $X$ be a differential space. Just as for forms on finite dimensional manifolds, we can define an \textit{integration over the fibers} map for differential spaces:
$$\int_{\Delta^n}: \Lambda^k(X \times \Delta^n) \to \Lambda^{k-n}(X)$$
The definition is plotwise. Let $\omega$ be a differential form on $X \times \Delta^n$. Given a plot, $\alpha: U \to X$ on $X$, define $(\int_{\Delta_n} \omega)_{\alpha}$ to be the form on $U$ 
$$ \int_{\Delta_n} (\omega_{\alpha \times \text{id}_{\Delta^n}})  $$
Here it is used that $\alpha \times \text{id}_{\Delta^n}$ is a plot on $X \times \Delta^n$. The integration is over the volume element $dt_1 \wedge \dots dt_n$ on $\Delta^n$.
\end{definition}

The preceding two definitions allow us to now give the definition of iterated integrals.

\begin{definition} (Iterated integrals) Let $r$ be a nonnegative integer. Let $\omega_1, \dots \omega_r$ be forms of dimension $n_1, \dots, n_r$ on the manifold $M$. Then the \textit{iterated integral} $\smallint \omega_1 \dots \omega_r$ is a $(-r+\Sigma n_i)$-form on $\Omega M$ given by $$\smallint \omega_1 \dots \omega_r \coloneqq \int_{\Delta^r} \text{ev}_r^*(\omega_1 \times \dots \times \omega_r).$$ Here $\omega_1 \times \dots \times \omega_r$ is the product form $\pi_1^*\omega_1 \wedge \dots \wedge \pi_r^*\omega_r$ on $M^{\times r}$. The iterated integral is said to be of \textit{length} $r$.
\end{definition}

\subsection{Properties of iterated integrals}

Let $X$ be a simply connected compact manifold. Let $\smallint\Lambda^*(X)$ denote the differential graded subalgebra of $\Lambda^*(\Omega X)$ generated by iterated integrals. 

Let $C_*(\Omega X)$ denote the chain complex of smooth chains on $\Omega X$. There is an integration map\footnote{ also referred to as the \textit{Stokes map} in \cite{itintreview}} 
\begin{align*}
\rho: \Lambda^n(\Omega X) \to \;& \text{Hom}(C_n(\Omega X); \mathbb{R}) \\
\omega \mapsto \;& \{(\sigma: \Delta^n \to \Omega X) \mapsto \int_{\Delta^n}\sigma^*\omega\}
\end{align*}

\begin{theorem}(Chen's loop space de Rham theorem \cite{itpathints}) Let $X$ be as above. The integration map restricted to $\smallint\Lambda^*(X)$ induces an isomorphism
$$\rho_*: H^*(\smallint\Lambda^*(X)) \xrightarrow{\sim} H^*(\Omega X; \mathbb{R}).$$
\end{theorem}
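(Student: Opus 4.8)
The plan is to realise $\rho$ restricted to $\smallint\Lambda^*X$ as the geometric shadow of a purely algebraic comparison, namely the Eilenberg--Moore isomorphism for the path--loop fibration. The central object is the reduced bar complex $B(\Lambda^*X)$ of the augmented commutative DGA $\Lambda^*X$, whose part in bar-length $r$ is $\big(\overline{\Lambda^*X}[1]\big)^{\otimes r}$, where $\overline{\Lambda^*X}$ is the augmentation ideal (forms vanishing at the basepoint), and whose differential is the sum of the internal de Rham differential and the bar differential built from the wedge product. The iterated integral construction gives a map $It\colon B(\Lambda^*X)\to\Lambda^*(\Omega X)$ by $[\omega_1|\dots|\omega_r]\mapsto\smallint\omega_1\cdots\omega_r$, and by definition its image is exactly $\smallint\Lambda^*X$.

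First I would verify that $It$ is a chain map, following \cite{hainthesis}. This is a Stokes'-theorem computation on the fibre simplex $\Delta^r$: the de Rham differential of $\smallint\omega_1\cdots\omega_r$ splits into a contribution from $d$ applied to each $\omega_i$ and a contribution from $\partial\Delta^r$. The interior faces $\{t_i=t_{i+1}\}$ produce the terms $\smallint\omega_1\cdots(\omega_i\wedge\omega_{i+1})\cdots\omega_r$ of the bar differential, while the two outer faces $\{t_1=0\}$ and $\{t_r=1\}$ contribute nothing because every $\omega_i$ lies in the augmentation ideal and so restricts to zero under evaluation at the basepoint $\gamma(0)=\gamma(1)=x_0$. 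Matching signs then shows $It$ commutes with differentials, so $H^*(\smallint\Lambda^*X)$ is a quotient of $H^*(B(\Lambda^*X))$ and $\rho$ descends to a map on bar cohomology.

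Next I would identify $H^*(B(\Lambda^*X))$ with $H^*(\Omega X;\mathbb{R})$ by a two-step algebraic comparison. By de Rham's theorem there is a zig-zag of quasi-isomorphisms of augmented DGAs between $\Lambda^*X$ and the singular cochain algebra $C^*(X;\mathbb{R})$; since $X$ is simply connected the augmentation ideal is concentrated in degrees $\geq 2$, so the bar complex is finite in each total degree and the bar construction carries this quasi-isomorphism to a quasi-isomorphism $B(\Lambda^*X)\simeq B(C^*(X;\mathbb{R}))$. The Eilenberg--Moore theorem applied to the fibration $\Omega X\to PX\to X$ with $PX$ contractible then gives $H^*(B(C^*(X;\mathbb{R})))\cong H^*(\Omega X;\mathbb{R})$, the spectral sequence converging precisely because of simple-connectivity. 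Composing, $H^*(B(\Lambda^*X))\cong H^*(\Omega X;\mathbb{R})$.

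The hard part, and the step I expect to be the main obstacle, is showing that this abstract isomorphism is implemented by $\rho\circ It$, and hence that $It$ itself is a quasi-isomorphism onto $\smallint\Lambda^*X$ and $\rho_*$ is the claimed isomorphism. I would do this by comparing filtrations: the bar-length filtration on $B(\Lambda^*X)$ should map under $\rho\circ It$ to the Eilenberg--Moore cosimplicial filtration of $C^*(\Omega X;\mathbb{R})$ coming from the cobar resolution of the path fibration. One checks that the induced map on associated graded agrees, degree by degree, with the de Rham comparison on $X^{\times r}$, via Fubini for the fibre integral over $\Delta^r$ paired against the simplicial evaluation chains realising the Eilenberg--Moore filtration. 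Because both filtrations are bounded in each total degree by simple-connectivity, an isomorphism on associated graded forces $\rho\circ It$ to be a quasi-isomorphism; together with the bar-cohomology identification this shows $It$ has no cohomological kernel and that $\rho_*$ is an isomorphism. The technical delicacy throughout is the careful bookkeeping of signs and the verification that the geometric fibre integration matches the algebraic shuffle and cobar structure on the nose at the level of associated graded.
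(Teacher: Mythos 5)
First, a point of orientation: the paper does not prove this statement at all — it is quoted as background and attributed to Chen's \emph{Iterated path integrals} — so your proposal can only be measured against the proof in the cited literature (Chen, and Hain's thesis). Your outline is indeed that standard strategy: realise iterated integration as a chain map $It$ out of the bar complex via Stokes' theorem on $\Delta^r$, identify bar cohomology of cochains with $H^*(\Omega X;\mathbb{R})$ by Eilenberg--Moore for the path--loop fibration, and then check the geometric map implements the algebraic isomorphism by a filtration comparison.

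However, there are two genuine gaps, both stemming from one confusion. You assert that ``since $X$ is simply connected the augmentation ideal is concentrated in degrees $\geq 2$.'' This is false: simple connectivity constrains the \emph{cohomology} of $\Lambda^*X$, not the complex itself, which has an enormous space of $0$-forms and $1$-forms. Consequently $B(\Lambda^*X)$ is not finite in each total degree (a bar word of $r$ one-forms has total degree $0$ for every $r$), the bar-length spectral sequence does not obviously converge, and the claim that the bar construction carries your de Rham zig-zag to a quasi-isomorphism is unjustified for $\Lambda^*X$ itself. Second, even granting that $\rho\circ It$ is a quasi-isomorphism, your endgame fails: surjectivity of $It$ onto $\smallint\Lambda^*X$ at the chain level does not let you transfer injectivity of $\rho_*$ to $H^*(\smallint\Lambda^*X)$, because a closed iterated integral $\omega = It(b)$ need not come from a closed bar element --- $db$ may be a nonzero element of $\ker It$, and $\ker It$ is genuinely nonzero on $B(\Lambda^*X)$: for any function $f$ with $f(x_0)=0$ the length-one integral $\smallint df = f(\gamma(1))-f(\gamma(0))$ vanishes identically on loops. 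This is precisely why Chen's isomorphism $B(A)\cong\smallint A$ (the paper's Theorem 3.3) carries the hypotheses $A^0=\mathbb{R}$ and $A^1\cap d\Lambda^0(X)=0$. The repair, which is how the cited proof actually runs, is to carry out your argument for a sub-DGA $A\subset\Lambda^*X$ satisfying those conditions (e.g.\ one with $A^1=0$, which exists since $X$ is simply connected), where bar-length does control total degree and $It$ is an isomorphism onto $\smallint A$, and then to supply a separate comparison between $\smallint A$ and the full algebra $\smallint\Lambda^*X$; that last step is not addressed in your proposal at all.
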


So any cohomology class of $\Omega X$ (and similarly any homotopy functional on $X$) can be represented by an iterated integral.

In fact, Chen gives a stronger version of this:
\begin{theorem} \cite[Thm~2.3.1]{itpathints} Let $X$ be as above. Let $A$ be a subalgebra of $\Lambda^*(X)$ such that the integration map $A \to C^*(X; \mathbb{R})$ induces an isomorphism on cohomology. Let $\smallint A$ denote the differential graded subalgebra of $\smallint \Lambda^*(X)$ generated by iterated integrals of forms in $A$. Then the integration map restricted to $\smallint A$, gives an isomorphism
$$\rho_*: H^*(\smallint A) \xrightarrow{\sim} H^*(\Omega X; \mathbb{R}).$$
\end{theorem}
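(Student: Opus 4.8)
The plan is to deduce this refinement from the version of Chen's loop space de Rham theorem already stated above (the case $A = \Lambda^*(X)$) by exploiting the homotopy functoriality of the bar construction. Recall that the iterated integral map packages into a map of complexes out of the reduced bar complex: writing $B(A)$ for the bar complex of the augmented DGA $A$, equipped with the differential combining the internal differential of $A$ with the bar (multiplication) differential, the assignment $[\omega_1 | \cdots | \omega_r] \mapsto \smallint \omega_1 \cdots \omega_r$ defines a chain map $\theta_A : B(A) \to \Lambda^*(\Omega X)$ whose image is exactly the subalgebra $\smallint A$. The identical construction with $A$ replaced by $\Lambda^*(X)$ gives $\theta : B(\Lambda^*(X)) \to \Lambda^*(\Omega X)$ with image $\smallint \Lambda^*(X)$, and these are compatible with the inclusion $\iota : A \hookrightarrow \Lambda^*(X)$ via the induced map $B(\iota) : B(A) \to B(\Lambda^*(X))$.

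First I would observe that $\iota$ is a quasi-isomorphism of DGAs. By hypothesis the integration map $A \to C^*(X;\mathbb{R})$ is an isomorphism on cohomology, and the ordinary de Rham theorem gives the same for $\Lambda^*(X) \to C^*(X;\mathbb{R})$; since both integration maps are compatible with $\iota$, the inclusion $\iota$ induces an isomorphism on cohomology. The crux is then to promote this to the statement that $B(\iota)$ is a quasi-isomorphism. I would do this by filtering both bar complexes by bar length (word length) and comparing the resulting spectral sequences. On the associated graded, Künneth identifies the $E_1$-page with the bar complex $B(H^*(A))$ of the cohomology algebra, so $B(\iota)$ induces an isomorphism on $E_1$. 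Simple connectivity of $X$ enters here to guarantee convergence: reduced cohomology is concentrated in degrees $\geq 2$ (as $H^0 = \mathbb{R}$ and $H^1 = 0$), so a bar word of length $r$ contributes in total degree $\geq r$, and hence each total degree receives contributions from only finitely many filtration levels. I expect this homotopy-invariance step to be the main obstacle: the convergence bookkeeping is delicate, and one must verify compatibility with the full differential rather than merely the bar differential.

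Finally I would assemble the pieces. The previously stated theorem says $\rho$ restricted to $\smallint \Lambda^*(X)$ is an isomorphism onto $H^*(\Omega X;\mathbb{R})$; unwinding Chen's construction, this is the statement that $\theta$ is a quasi-isomorphism onto $\smallint \Lambda^*(X)$ and that the composite $\rho \circ \theta$ realizes the isomorphism $H^*(B(\Lambda^*(X))) \cong H^*(\Omega X;\mathbb{R})$. Running the same bar-length filtration argument for $\theta_A$ shows $\theta_A$ is likewise a quasi-isomorphism onto its image $\smallint A$, whence $H^*(\smallint A) \cong H^*(B(A))$; here the only subsidiary point to check is that the kernel of $\theta_A$ is acyclic, which the comparison with the $\Lambda^*(X)$ case controls. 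Combining this with the quasi-isomorphism $B(\iota)$ from the previous paragraph yields a chain of isomorphisms
\[ H^*(\smallint A) \cong H^*(B(A)) \cong H^*(B(\Lambda^*(X))) \cong H^*(\Omega X;\mathbb{R}). \]
Chasing the commuting square of integration maps then confirms that this composite isomorphism is induced by $\rho_*$ itself, which is the assertion to be proved.
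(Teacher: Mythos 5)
The paper does not actually prove this statement; it is quoted verbatim from Chen (Thm 2.3.1 of \emph{Iterated path integrals}), so your proposal can only be measured against its own internal logic. Measured that way, it has a genuine gap at its central step. Your sandwich argument needs both identifications $B(A) \simeq \smallint A$ and $B(\Lambda^*(X)) \simeq \smallint \Lambda^*(X)$, and you assert that the second is obtained by ``unwinding'' the loop-space de Rham theorem stated just before this one. It is not: that theorem concerns only the integration map $\rho$ on $\smallint\Lambda^*(X)$ and says nothing about the bar complex. The bridge between bar complexes and iterated integrals is the separate theorem stated just after this one (Chen's Thm 4.1.1), and it carries the hypotheses $A^0 = \mathbb{R}$ and $A^1 \cap d\Lambda^0(X) = 0$, both of which fail badly for $A = \Lambda^*(X)$, whose degree-zero part is all of $C^\infty(X)$ and whose degree-one part contains every exact $1$-form. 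The failure is concrete, not cosmetic. With the paper's convention $B(A) = \bigotimes_{r \geq 0} A^{>0}$, take $f \in C^\infty(X)$ nonconstant: the word $|df|$ has total degree $0$, is a cocycle (since $d(df)=0$), and bounds nothing because $B$ has no elements of degree $-1$; yet $\theta(|df|) = \smallint df = \int_0^1 \tfrac{d}{dt}f(\gamma(t))\,dt = 0$ on loops. So $H^0(B(\Lambda^{>0}(X)))$ is infinite-dimensional while $H^0(\smallint\Lambda^*(X)) \cong H^0(\Omega X;\mathbb{R}) = \mathbb{R}$, and $\theta$ cannot be a quasi-isomorphism onto $\smallint\Lambda^*(X)$; the same classes show $B(\iota)$ fails to be a quasi-isomorphism in this convention. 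If you instead use the full augmentation ideal $\ker(\mathrm{ev}_{x_0})$, so that $|df| = \pm\, d|f - f(x_0)|$ and bar invariance is restored, then $\theta$ is no longer a chain map: words containing degree-zero letters are sent to zero (iterated integrals with a $0$-form vanish), but, for example, $\theta(d(|f|\omega|))$ contains the generally nonzero term $\smallint (df)\,\omega$. Either way one half of your sandwich collapses, and your closing remark that the acyclicity of $\ker\theta_A$ ``is controlled by comparison with the $\Lambda^*(X)$ case'' assumes exactly the point at issue.

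Two further remarks. First, the parts of your argument that are sound are the two-out-of-three step (yes, $\iota$ is a quasi-isomorphism) and, with the full augmentation ideal, the homotopy invariance of the bar construction; but your convergence justification is pitched at the wrong level. The filtration is by word length at the \emph{chain} level, where $1$-forms (and, in the untruncated convention, $0$-forms) abound regardless of $H^1(X) = 0$, so it is false that each total degree meets only finitely many filtration levels. What saves the lemma is that the bar-length filtration is increasing, exhaustive, and bounded below: one proves $F_pB(\iota)$ is a quasi-isomorphism by induction on $p$ (five lemma against the associated graded, where K\"unneth applies over $\mathbb{R}$) and passes to the colimit. Simple connectivity is genuinely needed elsewhere, namely in identifying $H^*(B(-))$ with $H^*(\Omega X;\mathbb{R})$, which is an Eilenberg--Moore/Adams-type statement for the path-loop fibration. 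Second, note that the theorem you are proving does not grant the normalization hypotheses on $A$ (indeed $A = \Lambda^*(X)$ is an allowed input, so this theorem contains the previous one as a special case); a repaired proof along your lines would have to compare $B(A)$, for $A$ \emph{satisfying} the normalization conditions, with a cochain-level model of $\Omega X$ (Eilenberg--Moore), and then reach general $A$ by a zig-zag through normalized sub-DGAs --- never through $B(\Lambda^*(X))$.
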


An example of such an $A$ that will be used in Section 5 is the image of a minimal model $m_X: \mathcal{M}_X \to \Lambda^*(X)$.

The final property of iterated integrals that we will use is their relation to the bar construction.

Recall the definition of the bar construction of a differential graded algebra $A$ as follows. It is a differential graded algebra $B(A)$ that, as an algebra, is given by $$ B(A) \coloneqq \bigotimes_{r\geq 0} A^{>0}$$
with an element $a_1 \otimes \dots \otimes a_r$ denoted by $a_1 | \dots | a_r$ and has degree $(\text{deg}(a_1) + \dots + \text{deg}(a_r) - r$.
The differential on $B(A)$ is 
$$ d(|a_1| \dots | a_r|) = - \sum_{i=1}^{r} (-1)^{n_i}|a_1| \dots | da_i | \dots | a_r| + \sum_{i=2}^{r} (-1)^{n_i} |a_1| \dots |a_{i-1} a_i | \dots | a_r | $$
where $n_i = \sum{j<i}(\text{deg}(a_j)-1)$.

\begin{theorem} [Iterated Path Integrals, Thm 4.1.1]
Let $X$ be a connected differential space with $H_0(X) = \mathbb{Z}$. Let $A$ be a differential graded subalgebra of $\Lambda^*(X)$ with $A^0 = \mathbb{R}$ and $A^1 \cap d\Lambda^0(X) = 0$. Then the map $B(A) \to \smallint A \subset \Lambda^*(\Omega X)$ given by $$ | \omega_1 | \dots | \omega_r | \mapsto \smallint \omega_1 \dots \omega_r$$ is an isomorphism of differential graded algebras.
\end{theorem}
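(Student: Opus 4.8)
The plan is to check, in order, that the assignment $|\omega_1|\cdots|\omega_r|\mapsto\smallint\omega_1\cdots\omega_r$ is well defined, multiplicative, surjective, a chain map, and finally injective, with only the last point requiring real work. Well-definedness and the degree count are immediate, since $B(A)$ is assembled from $A^{>0}$, which is exactly the range of positive-degree forms on which $\smallint\omega_1\cdots\omega_r=\int_{\Delta^r}\text{ev}_r^*(\omega_1\times\cdots\times\omega_r)$ is defined, and both sides carry degree $\sum_i\deg\omega_i-r$. Surjectivity onto $\smallint A$ holds by definition, as $\smallint A$ is generated by these iterated integrals. For multiplicativity I would match the shuffle product on $B(A)$ with the wedge product in $\smallint A$: the identity $\smallint\omega_1\cdots\omega_p\wedge\smallint\omega_{p+1}\cdots\omega_{p+q}=\sum_{\sigma}\pm\,\smallint\omega_{\sigma(1)}\cdots\omega_{\sigma(p+q)}$, summed over $(p,q)$-shuffles $\sigma$, follows from the fact that $\Delta^p\times\Delta^q$ subdivides, up to lower-dimensional overlaps and with signs, into the $(p+q)$-simplices indexed by shuffles, together with the compatibility of fiber integration with this subdivision.

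For the chain-map property I would apply the Stokes formula for integration over the fiber, $d\int_{\Delta^r}\eta=\int_{\Delta^r}d\eta\pm\int_{\partial\Delta^r}\eta$, to $\eta=\text{ev}_r^*(\omega_1\times\cdots\times\omega_r)$. Since $d$ commutes with pullback, the first term reproduces the internal part $\sum_i\pm\,\smallint\omega_1\cdots d\omega_i\cdots\omega_r$ of the bar differential. The boundary $\partial\Delta^r$ contributes through two kinds of faces. On an interior face $\{t_i=t_{i+1}\}$ the map $\text{ev}_r$ factors through the diagonal in the $i$-th and $(i+1)$-st copies of $X$, so the restriction equals $\text{ev}_{r-1}^*(\omega_1\times\cdots\times(\omega_i\wedge\omega_{i+1})\times\cdots)$ and yields the multiplicative part $\sum_i\pm\,\smallint\omega_1\cdots(\omega_i\omega_{i+1})\cdots\omega_r$. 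On the end faces $\{t_1=0\}$ and $\{t_r=1\}$ the corresponding evaluation coordinate is pinned at the basepoint $x_0$, so the map into that copy of $X$ is constant; as $\omega_1,\omega_r\in A^{>0}$ have positive degree, these faces contribute zero. Here the hypotheses $A^0=\mathbb{R}$ and $A^1\cap d\Lambda^0(X)=0$ guarantee that no stray degree-zero factors spoil the end-face vanishing. Matching the signs with the stated differential completes this step.

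The crux is injectivity. I would filter $B(A)$ by bar length, $F_r=\bigoplus_{s\le r}(A^{>0})^{\otimes s}$; the map is filtered, and on the associated graded only the internal differential survives, so by a standard filtration argument it suffices to prove that the iterated integrals $\smallint\omega_1\cdots\omega_r$ attached to a basis of $\bigoplus_r(A^{>0})^{\otimes r}$ are linearly independent as forms on $\Omega X$. To detect them I would pair against concatenation plots. Using connectedness ($H_0(X)=\mathbb{Z}$) to base everything at $x_0$, choose for each $i$ a smooth family of based loops $\hat g_i:N_i\to\Omega X$ with $\dim N_i=\deg\omega_i-1$ and $\langle\hat g_i^*\smallint\omega_i,[N_i]\rangle\ne0$, and form $\Psi:N_1\times\cdots\times N_r\to\Omega X$ by concatenating the loops. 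Chen's deconcatenation formula then expands the pairing $\langle\Psi^*\smallint\eta_1\cdots\eta_s,[N_1\times\cdots\times N_r]\rangle$ as a sum over distributions of the times $t_1\le\cdots\le t_s$ among the $r$ concatenated loops; the diagonal distribution ($s=r$, one time per loop) gives $\prod_i\langle\hat g_i^*\smallint\omega_i,[N_i]\rangle\ne0$, while every other distribution forces some loop to absorb a number of times incompatible with its dimension and hence vanishes. This renders the pairing matrix, indexed by basis tensors and their test plots, triangular with nonzero diagonal, so the iterated integrals are independent.

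The main obstacle is this injectivity argument, and within it the two delicate points are the combinatorial bookkeeping showing that all off-diagonal distributions in the deconcatenation pairing vanish (a dimension-and-sign count), and the construction of the test families $\hat g_i$ realizing $\langle\hat g_i^*\smallint\omega_i,[N_i]\rangle\ne0$ for an arbitrary positive-degree form. The necessity of the hypothesis $A^1\cap d\Lambda^0(X)=0$ is already visible at length one: an exact $1$-form $\omega=df$ satisfies $\smallint_\gamma df=f(\gamma(1))-f(\gamma(0))=0$ on every loop, so excluding exact $1$-forms from $A^1$ is exactly what keeps $\omega\mapsto\smallint\omega$ injective there, and this is the base case feeding the filtration argument.
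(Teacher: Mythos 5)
A preliminary remark: the paper itself gives no proof of this statement --- it is quoted from Chen (\emph{Iterated Path Integrals}, Theorem 4.1.1) as background --- so your proposal can only be measured against the standard argument. Your first four steps (well-definedness, degree count, multiplicativity via the shuffle decomposition of $\Delta^p\times\Delta^q$, and the Stokes/face analysis recovering the bar differential) are essentially the standard arguments and are sound, with one small correction: the vanishing on the end faces $\{t_1=0\}$ and $\{t_r=1\}$ uses only that loops are based and that the $\omega_i$ have positive degree; the hypotheses $A^0=\mathbb{R}$ and $A^1\cap d\Lambda^0(X)=0$ play no role there --- they are needed precisely for injectivity.

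The genuine gap is in the injectivity argument, in two places. First, your test families $\hat g_i\colon N_i\to\Omega X$ with $\langle\hat g_i^*\smallint\omega_i,[N_i]\rangle\neq 0$ cannot exist when $\omega_i$ is exact of degree at least $2$: if $\omega_i=d\eta$ with $\deg\eta\geq 1$, then $\smallint\omega_i=\pm\, d\smallint\eta$ (the end-face terms vanish on based loops), so $\smallint\omega_i$ is an exact form on $\Omega X$ and pairs to zero with the fundamental class of \emph{every} closed $N_i$. The hypothesis only excludes exact forms in degree one, and $A^{>0}$ will in general contain exact forms in higher degrees --- indeed in the paper's own Section 5 example, $\omega_a\wedge\omega_b=d\omega_y$ lies in $A$ and appears inside the bar words used there. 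More structurally, the theorem asserts injectivity at the level of differential forms, not cohomology classes, so no argument that pairs only against closed cycles can succeed; you must integrate over arbitrary chains/plots (with boundary), or avoid pairing altogether. Second, the triangularity claim (``every other distribution \dots vanishes'' by a dimension count) is false once forms of different degrees coexist: a block of two or more forms can have iterated-integral degree equal to $\dim N_j$ (two $2$-forms in one block match the test family of a $3$-form), so longer words can pair nontrivially against shorter test families, and distinct words of equal length and equal degrees pair nontrivially off the diagonal unless the $\hat g_i$ are chosen as a dual system --- which you have not constructed. The standard repair, in the spirit of what you propose: given a vanishing linear combination, pull back along the $r$-fold concatenation map $(\Omega X)^{\times r}\to\Omega X$ with $r$ the maximal bar length, and extract the component lying in $\Lambda^{>0}(\Omega X)^{\otimes r}$; by the deconcatenation formula only the length-$r$ words contribute to that component, each contributing $\smallint\omega_{i_1}\times\cdots\times\smallint\omega_{i_r}$, which reduces everything to length-one injectivity --- where $\smallint\omega=0$ forces $\omega\in d\Lambda^0(X)$, excluded in degree one by hypothesis and impossible in degree $\geq 2$ by a local computation --- followed by downward induction on length.
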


\subsection{Metric setup on $\Omega X$}

Let $(X,g)$ be a Riemannian manifold. The Riemannian distance function $d_X$ on $X$ allows us to define a distance function on $\Omega X$ as follows. Given loops $\gamma_1, \gamma_2$, define
$$d(\gamma_1, \gamma_2) = \sup_{t\in[0,1]} d_X(\gamma_1(t), \gamma_2(t)).$$
This gives a metric on $\Omega X$, which which allow us to define the notion of volume for chains on $\Omega X$.

By the \textit{volume} (or \textit{$n$-volume}) of a simplex $\sigma: \Delta^n \to \Omega X$, denoted $\text{Vol}(\sigma)$, we will mean the $n$-dimensional Hausdorff measure of the image of $\sigma$ with respect to the metric on $\Omega X$. Analogously, an arbitrary chain $\sum q_i \sigma_i$ (with $q_i \in \mathbb{R}$) has volume $\sum |q_i| \text{Vol}(\sigma_i)$.

There is also a function $\text{Length}: \Omega X \to \mathbb{R}$ which associates to a loop $\gamma$ its length when viewed as a path in $X$.  Then the \textit{suplength} of a simplex $\sigma$ is the supremum of the lengths of all of the loops in the image of $\sigma$. An arbitrary chain $\sum q_i \sigma_i$ has suplength equal to the maximum suplength of the $\sigma_i$ that have nonzero coefficient $q_i$.

Given a differential space $X$ with metric, such as a Riemannian manifold, or a loop space of either of such with the induced metric, we will define a norm on forms analogous to the norm on forms on a Riemannian manifold.

Recall the data of a $k$-form $\omega$ on $X$ is a compatible collection of $k$-forms $\omega_{\phi_i}$ on $U_i$ for each plot $\phi_i: U_i \to K$. Endow each $U_i$, a convex subset of Euclidean space, with the Euclidean metric. 

At a given point $x \in X$, define $||\omega(x)||_{\infty}$ to be the quantity
$$ ||\omega(x)||_{\infty} \coloneqq \sup \{ \dfrac{||\omega_{\phi}(y)||_{\infty}}{\text{Dil}_k(\phi)} \;|\; \phi: U \to X \text{ a plot with Dil$_k(\phi) \neq 0$, and $\phi(y) = x$}\} $$

Here $\text{Dil}_k(\phi)$ is the \textit{$k$-dilation} of $\phi$: say $\phi$ has $k$-dilation $\leq \lambda$ if each $k$-dimensional surface $\Sigma$ in $U$ has image in $X$ with Hausdorff $n$-measure at most $\lambda \text{Vol}_k(\Sigma)$.  Note this is consistent with $||\cdot||_{\infty}$ defined on Riemannian manifolds, since in this setting $\omega_{\phi} = \phi^*(\omega)$ and for $k$-forms $\omega$, $||\phi^*(\omega)||_{\infty} \leq \text{Dil}_k(\phi)||\omega||_{\infty}$.

There are two key properties of this norm that we will use, both generalizations to the properties of the norm on forms on Riemannian manifolds. 

\begin{lemma} If $f: X \to Y$ is a smooth $L$-Lipschitz map between metric differential spaces, then for any $n$-form $\alpha$ on $Y$, $||f^*\alpha(x)||_{\infty} \leq L^n||\alpha(f(x))||_{\infty}$.
\end{lemma}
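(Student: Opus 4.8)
The plan is to verify the bound plotwise, reducing the statement about the differential-space norm $\|\cdot\|_\infty$ to an elementary estimate about how the $k$-dilation composes under $f$. Fix a point $x \in X$ and an $n$-form $\alpha$ on $Y$. By the definition of the pullback form $f^*\alpha$ on the differential space $X$, for any plot $\phi: U \to X$ we have $(f^*\alpha)_\phi = \alpha_{f\phi}$, since $f\phi$ is a plot on $Y$ (as $f$ is smooth and hence pushes plots forward to plots). So computing $\|f^*\alpha(x)\|_\infty$ amounts to taking the supremum of $\|\alpha_{f\phi}(y)\|_\infty / \operatorname{Dil}_n(\phi)$ over plots $\phi$ with $\phi(y) = x$ and $\operatorname{Dil}_n(\phi) \neq 0$.

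The key step is the composition estimate for dilation: if $f$ is $L$-Lipschitz and $\phi: U \to X$ is a plot, then $\operatorname{Dil}_n(f\phi) \leq L^n \operatorname{Dil}_n(\phi)$. This is because an $L$-Lipschitz map multiplies the Hausdorff $n$-measure of any $n$-dimensional set by at most $L^n$; so if $\Sigma \subset U$ is an $n$-surface whose image under $\phi$ has $n$-measure at most $\operatorname{Dil}_n(\phi)\operatorname{Vol}_n(\Sigma)$, then its image under $f\phi$ has $n$-measure at most $L^n \operatorname{Dil}_n(\phi)\operatorname{Vol}_n(\Sigma)$. I would prove this directly from the definition of $k$-dilation given in the text.

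Combining these, for each plot $\phi$ with $\phi(y) = x$, the plot $f\phi$ witnesses $x' \coloneqq f(x) = f\phi(y)$ in the supremum defining $\|\alpha(f(x))\|_\infty$, giving
\[
\frac{\|\alpha_{f\phi}(y)\|_\infty}{\operatorname{Dil}_n(f\phi)} \leq \|\alpha(f(x))\|_\infty.
\]
Rearranging with the dilation bound $\operatorname{Dil}_n(f\phi) \leq L^n \operatorname{Dil}_n(\phi)$ yields
\[
\frac{\|\alpha_{f\phi}(y)\|_\infty}{\operatorname{Dil}_n(\phi)} \leq L^n \,\|\alpha(f(x))\|_\infty,
\]
and taking the supremum over all such $\phi$ gives $\|f^*\alpha(x)\|_\infty \leq L^n \|\alpha(f(x))\|_\infty$, as desired.

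The main obstacle is a pair of degenerate cases that must be handled with care. First, plots $\phi$ with $\operatorname{Dil}_n(\phi) = 0$ are excluded from the supremum defining $\|f^*\alpha(x)\|_\infty$, but one must check these contribute nothing: if $\operatorname{Dil}_n(\phi) = 0$ then $\operatorname{Dil}_n(f\phi) = 0$ as well, so such plots are likewise excluded on the target side and do not affect the inequality. Second, if $L = 0$ the map $f$ is constant, so $f^*\alpha = 0$ and the bound holds trivially; I would dispatch this at the outset. The only genuinely substantive point is the measure-theoretic fact that Lipschitz maps scale Hausdorff $n$-measure by at most $L^n$, which is standard.
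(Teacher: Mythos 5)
Your proposal is correct and takes essentially the same route as the paper's own proof: both verify the bound plotwise, using exactly the two ingredients that $f \circ \phi$ is a plot on $Y$ and that $\text{Dil}_n(f \circ \phi) \leq L^n\, \text{Dil}_n(\phi)$, and then compare the two suprema. One small remark: the degenerate case that genuinely needs attention is a plot $\phi$ with $\text{Dil}_n(\phi) \neq 0$ but $\text{Dil}_n(f \circ \phi) = 0$ (so that $f \circ \phi$ is not an admissible witness in the supremum defining $||\alpha(f(x))||_{\infty}$), rather than the implication you check; the paper's proof also passes over this case silently by restricting its supremum to plots with $\text{Dil}_n(f\circ\phi) \neq 0$, so your argument matches the paper's level of rigor.
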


\begin{proof}
The ingredients here are that if $\phi: U \to X$ is a plot then so is $f \circ \phi$, and also $\text{Dil}_k(f \circ \phi) \leq L^n \text{Dil}_k(\phi)$. Then
\begin{align*}
    ||f^*\alpha(x)||_{\infty} &= \sup \{ \frac{||(f^*\alpha)_{\phi}(y)||_{\infty}}{\text{Dil}_k{\phi}} \;\;|\;\; \phi: U \to X,\; \phi(y) = x,\;\; \text{Dil}_k(\phi) \neq 0 \} \\
    &= \sup \{ \frac{||\alpha_{f \circ \phi}(y)||_{\infty}}{\text{Dil}_k{\phi}} \;\;|\;\; \phi: U \to X,\; \phi(y) = x,\;\; \text{Dil}_k(\phi) \neq 0 \} \\
    &\leq L^n \sup \{ \frac{||\alpha_{f \circ \phi}(y)||_{\infty}}{\text{Dil}_k{\phi}} |\; f \circ \phi: U \to Y,\; f\circ \phi(y) = f(x),\; \text{Dil}_k(f \circ \phi) \neq 0 \} \\
    & \leq L^n ||\alpha(f(x))||_{\infty}.
\end{align*}

\end{proof}

\begin{lemma} Let $X$ be a metric differential space. Let $Z$ be a $k$-chain in $X$ and $\omega$ a $k$-form on $X$. Then the integration pairing satisfies
$$| \langle \omega, Z \rangle| \leq \text{Vol}(Z) \cdot \sup_{x \in \text{supp}(Z)}||\omega(x)||_{\infty}.$$
\end{lemma}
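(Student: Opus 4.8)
The plan is to reduce the statement to the case of a single smooth simplex and then to \emph{localise} the global dilation appearing in the definition of $||\cdot||_{\infty}$ until it becomes a pointwise Jacobian, at which point an area-formula argument identifies the limiting sum with the volume. First I would use linearity of the integration pairing $\rho$ together with the triangle inequality and the definitions $\text{Vol}(\sum q_i \sigma_i) = \sum |q_i|\text{Vol}(\sigma_i)$ and $\sup_{\text{supp}(Z)}||\omega||_{\infty} \ge \sup_{\text{supp}(\sigma_i)}||\omega||_{\infty}$ to reduce to a single simplex $\sigma:\Delta^k \to X$, where $\langle \omega, \sigma\rangle = \int_{\Delta^k}\omega_\sigma$ with $\omega_\sigma$ the pullback $k$-form supplied by $\sigma$ being a plot, and $|\langle\omega,\sigma\rangle| \le \int_{\Delta^k}|\omega_\sigma| = \int_{\Delta^k}||\omega_\sigma(y)||_{\infty}\,dy$.

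The naive estimate — feeding the whole plot $\sigma$ into the definition of $||\omega(x)||_{\infty}$ — yields only $|\langle\omega,\sigma\rangle| \le \text{Dil}_k(\sigma)\cdot \sup||\omega||_{\infty}\cdot\text{Vol}_k(\Delta^k)$, which is far too lossy, since the global dilation records the worst-case stretch over the entire simplex. Instead I would subdivide $\Delta^k$ into small pieces $R_j$. On each piece, restriction of forms along the (smooth, hence compatible) inclusion gives $\omega_{\sigma|_{R_j}} = \omega_\sigma|_{R_j}$, so applying the definition of the norm to the restricted plot $\sigma|_{R_j}$ (for pieces with nonzero dilation) gives the pointwise bound $||\omega_\sigma(y)||_{\infty} \le \text{Dil}_k(\sigma|_{R_j})\,||\omega(\sigma(y))||_{\infty}$ for $y\in R_j$. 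Integrating and summing yields
$$|\langle \omega, \sigma \rangle| \;\le\; \int_{\Delta^k} ||\omega_\sigma(y)||_{\infty}\, dy \;\le\; \Big(\sup_{x} ||\omega(x)||_{\infty}\Big) \sum_j \text{Dil}_k(\sigma|_{R_j})\,\text{Vol}_k(R_j).$$

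The heart of the argument, and the step I expect to be the main obstacle, is to show that the Riemann-type sum $\sum_j \text{Dil}_k(\sigma|_{R_j})\text{Vol}_k(R_j)$ converges to $\text{Vol}(\sigma)$ as the mesh tends to zero. The crucial observation is that for a \emph{linear} map the $k$-dilation equals the Jacobian determinant — the only $k$-dimensional surface in $\mathbb{R}^k$ is a full-dimensional region, whose $k$-volume is scaled by exactly that determinant — so as $R_j$ shrinks to a point $y_0$ the local dilation $\text{Dil}_k(\sigma|_{R_j})$ should converge to the pointwise metric Jacobian $J_\sigma(y_0)$. Making this rigorous requires metric differentiability of the plot (Kirchheim's theorem for Lipschitz maps into metric spaces), so that $\sigma$ is approximated near $y_0$ by a seminorm on $\mathbb{R}^k$, after which the metric area formula identifies $\int_{\Delta^k}J_\sigma$ with the Hausdorff measure of the image counted with multiplicity. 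I would flag explicitly that when $\sigma$ is not injective this multiplicity-weighted quantity can strictly exceed the bare Hausdorff measure of the image (e.g. a simplex double-covering a square shows the inequality fails for the bare image measure); the estimate is genuinely established for the multiplicity-weighted volume, which is the relevant notion of $\text{Vol}(\sigma)$ for the lower-volume-bound applications. Verifying the convergence uniformly enough to pass to the limit, and absorbing the pieces where $\text{Dil}_k(\sigma|_{R_j}) = 0$, are the remaining routine but essential technical points.
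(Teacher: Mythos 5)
The paper's own proof of this lemma is a single sentence: since $Z$ is finite dimensional, pulling back to $Z$ ``immediately reduces to the finite dimensional case.'' Your proposal follows the same overall strategy --- reduce to one simplex by linearity and run a finite-dimensional estimate on the pullback $\omega_\sigma$ --- but you have supplied the body of the argument that the word ``immediately'' conceals, and that work is genuinely needed: with the paper's sup-over-plots definition of $||\omega(x)||_{\infty}$, the only bound one gets from the single plot $\sigma$ is $|\langle\omega,\sigma\rangle|\leq \text{Dil}_k(\sigma)\,\text{Vol}_k(\Delta^k)\,\sup||\omega||_{\infty}$, and $\text{Dil}_k(\sigma)\,\text{Vol}_k(\Delta^k)$ dominates $\text{Vol}(\sigma)$ rather than the reverse, exactly as you observe. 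Your subdivision scheme (localize the dilation, identify the limit of $\sum_j \text{Dil}_k(\sigma|_{R_j})\text{Vol}_k(R_j)$ with $\int_{\Delta^k}J_\sigma$, then apply the area formula) is a correct way to close this gap. One refinement: since the paper works with \emph{smooth} chains, you should use continuity of the metric Jacobian rather than Kirchheim's a.e.\ differentiability, and this is not merely a simplification. Your sum is an \emph{upper} Riemann sum of the local dilation, and for a merely Lipschitz $\sigma$ upper sums converge to the integral of the upper semicontinuous envelope of $J_\sigma$, which can strictly exceed $\int_{\Delta^k} J_\sigma$; continuity of $J_\sigma$ for smooth plots (for the loop space with the sup metric, the metric differential is a supremum over $t\in[0,1]$ of continuous quantities, hence continuous) is what makes the limiting step legitimate.

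Two further points of comparison. First, your multiplicity caveat is a genuine catch, not pedantry: the paper defines $\text{Vol}(\sigma)$ as the bare Hausdorff measure of the image, and for that quantity the stated inequality is false for non-injective simplices (already for $k=1$, a simplex traversing an embedded circle twice against an arc-length form gives $4\pi \leq 2\pi$). What your argument proves --- and what the paper's one-line reduction also implicitly proves --- is the bound with $\text{Vol}$ read as multiplicity-weighted volume (mass); the applications in Sections 4 and 6 remain correct under that reading, with the lower volume bounds of Theorem A then being lower bounds on mass. Second, the step you call routine, absorbing the pieces with $\text{Dil}_k(\sigma|_{R_j})=0$, is the one place where the paper's definitions give you nothing to work with: $||\omega(x)||_{\infty}$ is a supremum over plots of \emph{nonzero} dilation only, so on an open set of $\Delta^k$ where the dilation of $\sigma$ vanishes identically, compatibility alone does not force $\omega_\sigma$ to vanish, and the desired estimate cannot be extracted from the definitions for an abstract $k$-form on an abstract metric differential space. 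Points where the local dilation is positive on every neighborhood but tends to zero are fine (your pointwise bound forces $||\omega_\sigma(y)||_{\infty}=0$ there), so this is a defect of the lemma's stated generality shared equally by the paper's own proof; it is harmless for the forms actually used, namely iterated integrals, whose plotwise pullbacks degenerate wherever the plot does.
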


\begin{proof} Since $Z$ is finite dimensional, upon pulling back to $Z$ this immediately reduces to the finite dimensional case.
\end{proof}

\section{Proof of Theorems \ref{thmc}, \ref{thmb}, and \ref{thma}}

In this section we prove Theorem \ref{thmc} and use this to deduce Theorems \ref{thmb} and \ref{thma}.

\begin{theorem} (Theorem \ref{thmc})
Let $(X,g)$ be a compact simply connected Riemmanian manifold, and $\omega_1, \dots, \omega_r$ forms on $X$. Then for any $\gamma \in \Omega X$ the form $\smallint \omega_1 \dots \omega_r$ on $\Omega X$ satisfies
$$ ||(\smallint \omega_1 \dots \omega_r)(\gamma)||_{\infty} \leq \frac{1}{r!} \text{Length}(\gamma)^r ||\omega_1||_{\infty} \dots ||\omega_r||_{\infty} $$
\end{theorem}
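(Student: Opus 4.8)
The plan is to bound the comass $||(\smallint \omega_1 \dots \omega_r)(\gamma)||_{\infty}$ plotwise, directly from the definition of the norm on the differential space $\Omega X$. Fix a plot $\phi: U \to \Omega X$ with $\phi(y) = \gamma$, and write $N = \big(\sum_i n_i\big) - r$ for the degree of the iterated integral, where $n_i = \deg \omega_i$. By the definition of $||\cdot||_{\infty}$ it suffices to establish, for every such $\phi$, the bound
$$ ||(\smallint \omega_1 \dots \omega_r)_{\phi}(y)||_{\infty} \;\leq\; \frac{1}{r!}\,\text{Length}(\gamma)^r\,||\omega_1||_{\infty}\cdots||\omega_r||_{\infty}\cdot\text{Dil}_N(\phi), $$
since dividing by $\text{Dil}_N(\phi)$ and taking the supremum over plots then yields the theorem.

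Unwinding the definitions, $(\smallint \omega_1 \dots \omega_r)_{\phi} = \int_{\Delta^r} F^*(\omega_1\times\cdots\times\omega_r)$, where $F = \text{ev}_r\circ(\phi\times\text{id}): U\times\Delta^r \to X^{\times r}$ is the map $(u,t)\mapsto(\phi(u)(t_1),\dots,\phi(u)(t_r))$. Since the comass of an integral of a form-valued function is at most the integral of the comass, I would first reduce to a fixed time slice $t=(t_1,\dots,t_r)\in\Delta^r$: the fibre integration extracts the coefficient of $dt_1\wedge\cdots\wedge dt_r$, obtained by contracting $F^*(\omega_1\times\cdots\times\omega_r)$ with $\partial_{t_1},\dots,\partial_{t_r}$. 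Because the $i$-th component of $F$ depends on $t_i$ only, contracting with $\partial_{t_i}$ feeds the velocity $\gamma'(t_i)=\tfrac{\partial}{\partial t_i}\phi(u)(t_i)|_{u=y}$ into the $i$-th copy of $X$, so that at $y$ the slice form equals, up to sign, the pullback $H_t^*\Theta_t$, where $H_t(u)=(\phi(u)(t_1),\dots,\phi(u)(t_r))$ is ``evaluate the plot at the times $t_i$'' and $\Theta_t = (\iota_{\gamma'(t_1)}\omega_1)\times\cdots\times(\iota_{\gamma'(t_r)}\omega_r)$ is the external product of the contracted forms at the point $(\gamma(t_1),\dots,\gamma(t_r))$.

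For the slice bound I would use two facts. First, comass is multiplicative for external products, and contraction by a vector scales comass by at most its length, so $||\Theta_t||_{\infty} \leq \prod_i |\gamma'(t_i)|\,||\omega_i||_{\infty}$. Second, $H_t = E_t\circ\phi$ factors through the evaluation $E_t:\Omega X\to X^{\times r}$, $\delta\mapsto(\delta(t_1),\dots,\delta(t_r))$, each coordinate $\text{ev}_{t_i}:\Omega X\to X$ of which is $1$-Lipschitz for the sup metric; combined with the pullback inequality $||H_t^*\Theta_t||_{\infty}\leq\text{Dil}_N(H_t)\,||\Theta_t||_{\infty}$ and submultiplicativity of dilation under composition, this contributes a factor $\leq\text{Dil}_N(\phi)$. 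Assembling, the slice comass is at most $\prod_i|\gamma'(t_i)|\cdot\prod_i||\omega_i||_{\infty}\cdot\text{Dil}_N(\phi)$. Integrating over the ordered simplex and symmetrising to the cube,
$$ \int_{\Delta^r}\prod_{i=1}^r|\gamma'(t_i)|\,dt_1\cdots dt_r \;=\; \frac{1}{r!}\Big(\int_0^1|\gamma'(t)|\,dt\Big)^r \;=\; \frac{1}{r!}\,\text{Length}(\gamma)^r, $$
which produces exactly the claimed constant. The special case where every $\omega_i$ is a $1$-form ($N=0$) is entirely transparent: the iterated integral is the function $\gamma\mapsto\int_{\Delta^r}\prod_i(\omega_i)_{\gamma(t_i)}(\gamma'(t_i))\,dt$, and the estimate follows at once from this identity.

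The main obstacle is the exact-constant bookkeeping in the slice step, not the overall mechanism. Two points need care. Reconciling the plot-dilation definition of $||\cdot||_{\infty}$ on $\Omega X$ with multiplicativity of comass on $X^{\times r}$ is delicate because the two want different product norms: $E_t$ is $1$-Lipschitz only for the sup metric, whereas external comass is cleanly multiplicative for the $\ell^2$ metric, so one must check that the underlying Hausdorff-measure normalisations do not leak an extra constant. Relatedly, expanding $\Theta_t$ on the $N$ variation directions pushed in by $H_t$ produces a multinomial sum over the distributions of those directions among the $r$ factors, and one must verify that the correlation of the pushed-forward directions — they are the \emph{same} plot variation sampled at the different times $t_i$ — prevents this combinatorial factor from spoiling the $\tfrac1{r!}$. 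I would handle both by working with the dilation definition throughout and never passing to a fixed product metric on $X^{\times r}$, so that the $N$ variation directions are controlled by the single quantity $\text{Dil}_N(\phi)$ and the only surviving numerical constant is the $\tfrac1{r!}$ coming from $\text{Vol}(\Delta^r)$.
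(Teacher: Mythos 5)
Your proposal is correct and follows essentially the same route as the paper's proof: bound the form plotwise via $\mathrm{ev}_r\circ(\phi\times\mathrm{id})$, observe that the $t_i$-directions contribute the velocities $|\dot\gamma(t_i)|$ while the $U$-directions are controlled by the dilation of $\phi$, and integrate $\prod_i|\dot\gamma(t_i)|$ over $\Delta^r$ to produce the $\tfrac{1}{r!}\,\mathrm{Length}(\gamma)^r$ factor. The only difference is cosmetic: you organize the slice estimate through contraction and the Lipschitz pullback lemma applied to $H_t=E_t\circ\phi$, whereas the paper evaluates directly on a unit frame $[\tilde v_1,\dots,\tilde v_k,\partial\tilde t_1,\dots,\partial\tilde t_r]$ and uses a Hadamard-type volume bound.
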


\begin{proof}
Let $k$ be the dimension of the form $\smallint \omega_1 \dots \omega_r$. Let $\phi: U \to \Omega X$ be a plot with nonzero $k$-dilation and image containing $\gamma$. Let $g$ denote the composite 
$$ U \times \Delta^r \xrightarrow{\phi \times \text{id}_{\Delta^r}} \Omega X \times \Delta^r \xrightarrow{\text{ev}_r} X^{\times r}$$

It suffices to show that at a point $x \in \phi^{-1}(\gamma) \subset U$,
$$ || \int_{\Delta^r} g^*(\omega_1 \times \dots \times \omega_r)(x) ||_{\infty} \leq \text{Dil}_k(\phi) \cdot \frac{1}{r!} \text{Length}(\gamma)^r \cdot ||\omega_1 \times \dots \times \omega_r ||_{\infty}$$

Let $v_1, \dots, v_k$ be vectors in $T_x U$ with unit norm (under the Euclidean metric on U).

Then
\begin{align*}
    & |\int_{\Delta^r} g^*(\omega_1 \times \dots \omega_r)(x)[v_1, \dots, v_k]| \\
    &= |\int_{(t_i) \in \Delta^r} g^*(\omega_1 \times \dots \times \omega_r)(x, (t_i))[\tilde{v}_1, \dots, \tilde{v}_k, \partial \tilde{t}_1, \dots, \partial \tilde{t}_r]| \\
    \intertext{Where $\partial t_i$ are the standard basis vectors of $T_{(t_i)}\Delta^r$, $\tilde{v}_i$ is a lift of $v_i$ to $T_{(x,t)}(U \times \Delta^r)$ and similarly for $\partial \tilde{t}_i$.}
    &\leq \int_{(t_i) \in \Delta^r}||\omega_1 \times \dots \times \omega_r|| \cdot \text{Dil}_k(\phi) \cdot |\dot{\gamma}(t_1)| \cdot \ldots \cdot |\dot{\gamma}(t_r)| \\
    \intertext{Since $g: U \times \Delta^r \to X^{\times r}$ has $k$-dilation $\text{Dil}_{\phi}$ in the first factor, and $g_*(\partial \tilde{t}_j)$ has norm $\dot{\gamma}(t_j)$,}
    &= ||\omega_1 \times \dots \times \omega_r|| \cdot \text{Dil}_k(\phi) \cdot \int_{(t_i) \in \Delta^r} |\dot{\gamma}(t_1)| \cdot \ldots \cdot |\dot{\gamma}(t_r)| \\
    &= ||\omega_1 \times \dots \times \omega_r|| \cdot \text{Dil}_k(\phi) \cdot \frac{1}{r!}\text{Length}(\gamma)^r
\end{align*}
as required.

\end{proof}

Theorems \ref{thmb} and \ref{thma} follows directly from this and Theorem 3.1.

\begin{corollary}
(Theorem \ref{thmb}) Given $\alpha \in \pi_n(X) \otimes \mathbb{Q}$, the distortion of $\alpha$ is $O(L^{n-1+r})$ where $r$ is the smallest positive integer such that $\alpha$ is detected by a cohomology class in $H^{n-1}(\Omega X; \mathbb{Q})$ which can be represented by a sum of iterated integrals all of whose summands are length at most $r$.
\end{corollary}

\begin{proof} \textit{(of Corollary)}
Let $f: S^n \to X$ be an $L$-Lipschitz map such that $[f] = k\alpha \in \pi_*(X) \otimes \mathbb{Q}$. 

Let $\tau$ be the composite $\pi_n(X) \otimes \mathbb{Q} \xrightarrow{\sim} \pi_{n-1}(\Omega X) \otimes \mathbb{Q} \to H_{n-1}(\Omega X; \mathbb{Q})$. This map $\tau$ can also be refined to a map $\tau: \text{Map}(S^n, X) \to C_{n-1}(\Omega X)$, defined as follows. Fix a cycle $\zeta_n$ generating $H_{n-1}(\Omega S^n)$. Then define $\tau(f) \coloneqq (\Omega f)_*\zeta_n$. Let $\omega$ be a differential $(n-1)$-form on $\Omega X$ such that $\langle \omega, \tau(\alpha) \rangle = 1$. By Theorem 3.1 we can assume $\omega$ is a sum of iterated integrals. Then by the hypothesis of the corollary we can further assume that there is a constant $C > 0$ such that for all $\gamma \in \Omega X$, $$||\omega(\gamma)||_{\infty} \leq C \text{Length}(\gamma)^r.$$

Now, suppose $\zeta_n$ has suplength $L_0$ and volume $V_0$. Then $(\Omega f)_*(\zeta_n)$ is an $(n-1)$-cycle in $\Omega X$ of volume at most $L^{n-1}V_0$ and suplength $LL_0$.

Then
\begin{align*}
k = \langle \omega, (\Omega f)_*(\zeta_n) \rangle &\leq \text{Vol}((\Omega f)_*(\zeta_n) \cdot \sup_{\gamma \in \text{supp}(\tau(f))} ||\omega(\gamma)||_{\infty} \\ & \leq L^{n-1}V_0 \cdot C(LL_0)^r \\ &= O(L^{n-1+r})
\end{align*}
as required.
\end{proof}

\begin{corollary} (Theorem \ref{thma}) Let $\zeta \in H_n(\Omega X)$ be a nonzero homology class. Then there exists a constant $c>0$ and an integer $r > 0$ such that any cycle $Z$ representing $\zeta$ obeys the bound $$ \text{Suplength}(Z)^r \text{Vol}(Z) > c.$$
Moreover, $r$ can be taken to be the minimal nonnegative integer $k$ such that there exists a $\beta \in H^n(\Omega X)$ such that $\langle \beta,\; \zeta \rangle \neq 0$ and $\beta$ can be represented closed iterated integral with each summand built out of at most $k$ differential forms on $X$.
\end{corollary}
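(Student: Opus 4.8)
The plan is to dualize and let Theorem 4.1 convert iterated-integral length into the exponent $r$. Since $\zeta\neq 0$ and the de Rham pairing $H^n(\Omega X)\times H_n(\Omega X)\to\mathbb{R}$ is nondegenerate over $\mathbb{R}$, while Chen's Theorem 3.1 identifies $H^*(\Omega X)$ with the cohomology of $\smallint\Lambda^*(X)$, every class pairing nontrivially with $\zeta$ is representable by a closed iterated integral. So first I would let $r$ be the minimal nonnegative integer for which some $\beta\in H^n(\Omega X)$ with $\langle\beta,\zeta\rangle\neq 0$ admits a closed iterated-integral representative whose summands each have length at most $r$; this is exactly the $r$ named in the statement, and it exists by the two facts above. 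Rescaling, I would fix such a representative $\omega=\sum_j\smallint\omega^{(j)}_1\cdots\omega^{(j)}_{r_j}$ with all $r_j\leq r$, normalized so that $\langle\beta,\zeta\rangle=1$.

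Next I would produce the pointwise bound on $\omega$. Applying Theorem 4.1 to each summand gives $\|(\smallint\omega^{(j)}_1\cdots\omega^{(j)}_{r_j})(\gamma)\|_\infty\leq\frac{1}{r_j!}\mathrm{Length}(\gamma)^{r_j}\prod_i\|\omega^{(j)}_i\|_\infty$, so by the triangle inequality $\|\omega(\gamma)\|_\infty\leq\sum_j M_j\,\mathrm{Length}(\gamma)^{r_j}$ for constants $M_j$ depending only on the forms. This is the content of the clean form of Theorem \ref{thmc}: there is a $C>0$ with $\|\omega(\gamma)\|_\infty\leq C\,\mathrm{Length}(\gamma)^r$, at least once $\mathrm{Length}(\gamma)$ is bounded below.

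The estimate itself is then short. Given any cycle $Z$ representing $\zeta$, write $L\coloneqq\mathrm{Suplength}(Z)$ and $V\coloneqq\mathrm{Vol}(Z)$. Because $\omega$ is closed and $Z$ is a cycle, Stokes gives $\langle\omega,Z\rangle=\langle\beta,\zeta\rangle=1$. The integration-pairing lemma (Lemma 3.12) then yields $1=|\langle\omega,Z\rangle|\leq V\cdot\sup_{\gamma\in\mathrm{supp}(Z)}\|\omega(\gamma)\|_\infty\leq V\cdot C L^r$, where I use that every loop in $\mathrm{supp}(Z)$ has length at most $L$. Rearranging gives $L^r V\geq 1/C\eqqcolon c$, which is the claimed bound.

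The one genuine gap, and the step I expect to be the main obstacle, is the regime $L<1$: when $\mathrm{Length}(\gamma)$ is small the lower-length summands ($r_j<r$) dominate, so $\sum_j M_j\mathrm{Length}(\gamma)^{r_j}$ need not be bounded by $CL^r$. I would close this by showing that a nonzero class in $H_n(\Omega X)$ with $n\geq 1$ cannot be represented by cycles of arbitrarily small suplength. Choosing $\epsilon_0>0$ small enough that every based loop of length $<\epsilon_0$ is contained in a geodesically convex ball around the basepoint, the geodesic contraction of that ball deformation retracts the subspace of such loops onto the constant loop through loops of uniformly bounded length; hence any cycle of suplength $<\epsilon_0$ is null-homologous and $\zeta$ would vanish. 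Thus I may assume $L\geq L_{\min}\coloneqq\epsilon_0>0$, and on $L\geq L_{\min}$ the inequality $\mathrm{Length}(\gamma)^{r_j}\leq L_{\min}^{r_j-r}\,\mathrm{Length}(\gamma)^r$ absorbs every low-length term into the constant $C$, legitimizing the estimate above for all admissible $Z$ and completing the proof.
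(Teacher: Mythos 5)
Your proof follows the paper's argument essentially step for step: fix a dual class $\beta$ with $\langle \beta, \zeta\rangle = 1$, represent it by a sum of iterated integrals via Chen's de Rham theorem, bound that representative pointwise using Theorem \ref{thmc} (Theorem 4.1), and finish with the integration-pairing lemma to get $1 \leq \mathrm{Vol}(Z)\cdot C\,\mathrm{Suplength}(Z)^r$. The one place you go beyond the paper is your handling of the small-suplength regime, and this addresses a real imprecision rather than a phantom one: the paper asserts outright that $\|\omega(\gamma)\|_{\infty} \leq C\,\mathrm{Length}(\gamma)^r$ for \emph{all} $\gamma$, with $r$ the longest summand length, but when $\omega$ mixes iterated integrals of lengths $r_j < r$ the terms $\mathrm{Length}(\gamma)^{r_j}$ dominate $\mathrm{Length}(\gamma)^r$ as $\mathrm{Length}(\gamma) \to 0$, so that inequality cannot hold near length zero and the paper's final display does not by itself rule out representatives of tiny suplength and tiny volume. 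Your patch --- loops of length below the convexity radius lie in a geodesically convex ball about the basepoint, the corresponding subspace of $\Omega X$ contracts through loops of bounded length, so a nonzero class in $H_n(\Omega X)$, $n \geq 1$, forces $\mathrm{Suplength}(Z) \geq \epsilon_0 > 0$ --- is exactly what is needed to absorb the lower-order terms into the constant, and it is the step the paper's own proof glosses over.
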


\begin{proof} Let $\beta$ be some class in $H^n(\Omega X)$ such that $\langle \beta, \zeta \rangle = 1$. This $\beta$ can be represented by a differential form $\omega$ which is a sum of iterated integrals, so by Theorem \ref{thmc} there exists a constant $C > 0$ such that for any $\gamma \in \Omega X$, $$||\omega(\gamma)||_{\infty} \leq C \text{Length}(\gamma)^r$$ where $r$ is the length of the longest iterated integral summand in $\omega$.

Then $$1 = \langle \omega, Z \rangle \leq \text{Vol}(Z) \sup_{\gamma \in \text{supp}(Z)} ||\omega(\gamma)||_{\infty} \leq \text{Vol}(Z) \cdot C \; \text{Suplength}(Z)^r$$ so $\text{Suplength}(Z)^r \text{Vol}(Z) \geq 1/C$ as required.
\end{proof}

\section{Applications to Gromov's distortion of homotopy groups}

In this section we compute some examples of iterated integrals on simply connected Riemannian manifolds $K$.

\begin{example}
Let $K = S^n$ be a sphere and $\omega$ a volume form on $S^n$. In section 2, we demonstrated that iterated integral $\smallint \omega$ that detected the degree functional $\pi_n(S^n) \otimes \mathbb{Q} \to \mathbb{Q}$ and for even $n$ the iterated integral $\smallint \omega \omega$ detects the Hopf invariant $\pi_{2n-1}(S^{2n}) \otimes\mathbb{Q} \to \mathbb{Q}$. By the theorem, these differential forms on $\Omega S^n$ satisfy, for $\gamma \in \Omega S^n$:
\begin{align*}
    ||\smallint \omega(\gamma)||_{\infty} \; & \lesssim \text{Length}(\gamma) \\
     ||\smallint \omega \omega(\gamma)||_{\infty} \; & \lesssim \text{Length}(\gamma)^2
\end{align*}
Hence by the corollary we recover Gromov's original bounds for these homotopy functionals: if $f: S^n \to S^n$ is $L$-Lipschitz, then $|\text{deg}(f)| = O(L^n)$. If $n$ is even and $f: S^{2n-1} \to S^n$ is $L$-Lipschitz, then $|\text{Hopf}(f)| = O(L^{2n})$.
\end{example}

\begin{example}
Let $K = \mathbb{CP}^n$. Let $\omega$ be a $2$-form generating $H^2(K)$. The iterated integral $\smallint \omega \omega^n$ of length $2$ is a closed $(2n)$-form generating $H^{2n}(\Omega \mathbb{CP}^n)$ \cite{hainthesis} and hence detects the generator of $\pi_{2n+1}(\mathbb{CP}^n)$.  Thus the distortion of this generator is $O(L^{2n+2})$.
\end{example}

Next we give the example of the $8$-dimensional nonformal cell complex $X$ considered in \cite[\S5]{shadows}. An alternate approach to obstruction theory is given to show that the the generator of $\pi_{10}(X) \otimes \mathbb{Q}$ has distortion $O(L^{11})$.

\begin{example}
Let $K$ be a Riemannian manifold\footnote{Created, for example, by thickening the cell complex} equivalent to the following $8$-dimensional cell complex 
$$ X = (S^3_a \vee S^3_b) \cup_{[a,[a,b]]} D^8 \cup_{[b,[a,b]]} D^8.$$
The group $\pi_{10}(K) \otimes \mathbb{Q}$ is rank 1 (see \cite[\S 13(d),(e)]{fht} for details); let $\tau$ be a generator. The claim is that $\tau$ can be detected by an iterated integral of length two, implying that $\tau$ has distortion $O(L^{11})$.

Label the two $3$-cells of $X$ by $a$ and $b$, and the two $8$-cells $w$ and $z$ respectively. The cells give a basis of homology and so dually we can find (for each cell $e \in \{a,b,w,z\}$) $\text{dim}(e)$-dimensional forms $\omega_e$ on $K$ giving a basis of $H^*(K; \mathbb{Q})$. Then we will show that $\smallint \omega_a \omega_z$ detects $\tau$.

We show this by considering the minimal model of $K$:
$$\mathcal{M}_K = \langle x_1^{(3)},\;x_2^{(3)},\;y^{(5)},\; T^{(10)},\;\dots \;|\; dx_i = 0,\;dy = x_1x_2,\;dT = x_1x_2y,\; \dots \rangle.$$
and the quasi-isomorphism $m_K: \mathcal{M}_K \to \Lambda^*(K)$ can be taken such that $x_1 \mapsto \omega_a$, $x_2 \mapsto \omega_b$, $y$ is sent to a primitive $\omega_y$ of $\omega_a \wedge \omega_b$ and all other generators are sent to zero. Note also that $x_1 y \mapsto \omega_w$ and $x_2 y \mapsto \omega_z$.

The minimal model for $\Omega K$ can also be deduced from the minimal model for $K$: 
$$\mathcal{M}_{\Omega K} = \langle \tilde{x}_1^{(2)},\;\tilde{x}_2^{(2)},\;\tilde{y}^{(4)},\; \tilde{T}^{(9)},\;\dots \;|\; d\tilde{x}_i = 0,\;d\tilde{y} = 0,\;d\tilde{T} = 0,\; \dots \rangle. $$
Note that $H^9(\Omega K; \mathbb{Q})$ is also rank 1, so any nontrivial class in here will detect $\tau$.

Now, let $A$ be the image of the minimal model $\mathcal{M}_K$ in the de Rham complex $\Lambda^*(K)$ of $K$. In degree $3$ it is generated by $\omega_a$ and $\omega_b$. In degree $5$ it is generated by $\omega_y$. In degree $8$ it is generated by $\omega_w = \omega_a \wedge \omega_y$ and $\omega_z = \omega_b \wedge \omega_y$. In other positive degrees $A$ vanishes. By Theorem 3.2, the subalgebra $\smallint A$ of $\Lambda^*(\Omega K)$ has cohomology isomorphic to $H^*(\Omega K)$. So we are looking for cocycles in $\smallint A$ of degree $9$. By inspection the degree $9$ part of $\smallint A$ has dimension $8$, with basis
\begin{align*} 
    \smallint \omega_a \omega_w, \hspace{1mm} \smallint \omega_a \omega_z, \hspace{1mm} \smallint \omega_b \omega_w, \hspace{1mm} \smallint \omega_b \omega_z, \hspace{1mm} \smallint \omega_y (\omega_a \wedge \omega_b), \\ 
    \smallint \omega_a  (\omega_a \wedge \omega_b) \omega_b, \hspace{1mm}  \smallint \omega_a  \omega_a (\omega_a \wedge \omega_b), \hspace{1mm}
    \smallint (\omega_a \wedge \omega_b) \omega_b \omega_b.
\end{align*}
Using the fact that $\omega_a^2 = 0 = \omega_b^2$ and also that $\omega_a \wedge \omega_b \wedge \omega_y = 0$ for degree reasons, we can take the exterior derivatives of these in $\smallint A$ and see the space of $9$-cocycles in $\smallint A$ is dimension $7$. The space of $9$-coboundaries is dimension $6$, with a basis given the exterior derivatives of:
\begin{align*}
    \smallint \omega_a \omega_y \omega_a - \smallint \omega_a \omega_a \omega_a \omega_b, \\
    \smallint \omega_b \omega_y \omega_b - \smallint \omega_a \omega_b \omega_b \omega_b, \\
    \smallint \omega_a \omega_y \omega_b - \smallint \omega_a \omega_a \omega_b \omega_b, \\
    \smallint \omega_a \omega_a \omega_a \omega_b, \\
    \smallint \omega_a \omega_a \omega_b \omega_b, \\
    \smallint \omega_a \omega_b \omega_b \omega_b 
\end{align*}

Hence $\smallint \omega_a \omega_z$ is a representative for the generator of $H^9(\Omega K)$. Since this iterated integral is length $2$, we get the upper bound on the distortion of $\tau$ of $O(L^{11})$ as required.
\end{example}

\section{Sharpness of bounds}

We show that the bounds on differential forms representing the degree and Hopf invariant for spheres are sharp.

\begin{proposition}
Let $\omega$ be any $(n-1)$-form on $\Omega S^n$ generating $H^{n-1}(\Omega S^n)$. Then it cannot be the case that $||\omega(\gamma)||_{\infty} = o(\text{Length}(\gamma))$ for $\gamma \in \Omega S^n$.
\end{proposition}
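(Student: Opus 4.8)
The plan is to prove the contrapositive by a scaling argument. Let $\zeta \in H_{n-1}(\Omega S^n)$ be the generator with $\langle \omega, \zeta \rangle = c \neq 0$, represented by the standard sweepout cycle $\zeta_n = \eta_*[S^{n-1}]$, where $\eta \colon S^{n-1} \to \Omega S^n$ is the adjoint of $\mathrm{id}_{S^n}$; thus every loop in the support of $\zeta_n$ is a meridian of length at most $\pi$, and $\zeta_n$ has a fixed finite volume $V_0$. Suppose for contradiction that $\|\omega(\gamma)\|_{\infty} = o(\mathrm{Length}(\gamma))$; writing $g(\ell) := \sup_{\mathrm{Length}(\gamma)\le \ell}\|\omega(\gamma)\|_{\infty}$, this hypothesis reads $g(\ell)/\ell \to 0$ as $\ell \to \infty$. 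I will exhibit cycles that stretch these meridians to large length while keeping their volume controlled, and pair them against $\omega$.

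For each integer $k \ge 1$, choose a smooth degree-$k$ self-map $f_k \colon S^n \to S^n$ with Lipschitz constant at most $C k^{1/n}$; such maps realize the optimal growth rate for the Lipschitz constant of a degree-$k$ map of $S^n$. Set $Z_k := (\Omega f_k)_* \zeta_n$. Since $\tau$ (the adjunction isomorphism followed by the Hurewicz map) is natural and $[f_k] = k\,[\mathrm{id}_{S^n}]$ in $\pi_n(S^n)\otimes\mathbb{Q}$, the cycle $Z_k$ represents $k\zeta$, so that $\langle \omega, Z_k \rangle = kc$ because $\omega$ is closed.

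The geometry of $Z_k$ is estimated exactly as in the proof of Theorem \ref{thmb}. Because $f_k$ is $Ck^{1/n}$-Lipschitz, the induced map $\Omega f_k$ is $Ck^{1/n}$-Lipschitz for the sup-metric on $\Omega S^n$; hence $\mathrm{Suplength}(Z_k) \le \pi C k^{1/n}$ and $\mathrm{Vol}(Z_k) \le (Ck^{1/n})^{n-1} V_0$. Applying the integration-pairing lemma and noting that every loop in $\mathrm{supp}(Z_k)$ has length at most $\pi C k^{1/n}$,
$$ |kc| = |\langle \omega, Z_k \rangle| \le \mathrm{Vol}(Z_k)\cdot \sup_{\gamma \in \mathrm{supp}(Z_k)} \|\omega(\gamma)\|_{\infty} \le V_0 C^{n-1} k^{(n-1)/n}\cdot g\bigl(\pi C k^{1/n}\bigr). $$
Since $g(\ell) = o(\ell)$, the factor $g(\pi C k^{1/n})$ is $o(k^{1/n})$, so the right-hand side is $o(k^{(n-1)/n}\cdot k^{1/n}) = o(k)$. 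Dividing by $k$ yields $|c| = o(1)$, contradicting $c \neq 0$.

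The main obstacle is the interaction between the asymptotic hypothesis and the supremum over $\mathrm{supp}(Z_k)$: this support contains both genuinely long loops and the short loops coming from the regions where $f_k$ collapses, and a naive pointwise reading of $o(\mathrm{Length})$ controls only the former. Passing to the monotone envelope $g$ and bounding by $g(\mathrm{Suplength}(Z_k))$ is what makes the estimate uniform, and this is the step to state carefully. The only other inputs needing justification are the sharp Lipschitz bound $Ck^{1/n}$ for degree-$k$ self-maps and the naturality guaranteeing that $Z_k$ represents $k\zeta$, both of which are standard.
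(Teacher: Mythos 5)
Your proof is correct, but it takes a genuinely different route from the paper's. The paper stays inside the loop space: it applies the $L$-fold concatenation map $\{L\}\colon \Omega S^n \to \Omega S^n$, $\gamma \mapsto \gamma \cdot \ldots \cdot \gamma$, to a fixed generating cycle $\xi$. Since $\{L\}$ is $1$-Lipschitz, $\mathrm{Vol}(\{L\}_*\xi) \le \mathrm{Vol}(\xi)$ stays bounded while $[\{L\}_*\xi] = L[\xi]$ and the suplength grows linearly, so the pairing immediately gives $L \le V_0 \cdot o(L)$, with room to spare. You instead push the fixed sweepout forward under Lipschitz-optimal degree-$k$ self-maps $f_k$ of $S^n$: this requires the nontrivial (though standard) input that degree-$k$ maps exist with Lipschitz constant $O(k^{1/n})$, and it makes the final estimate exactly tight, $k^{(n-1)/n}\cdot o(k^{1/n}) = o(k)$ --- any loss in the exponent of the Lipschitz constant would break your argument, whereas the paper's version has slack because concatenation keeps the volume constant rather than letting it grow like $k^{(n-1)/n}$. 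What your approach buys is conceptual: it makes visible the duality between this sharpness statement and distortion (the form bound is sharp precisely because homologically efficient Lipschitz cycles exist, here supplied by the efficient degree-$k$ maps), and it reuses exactly the naturality bookkeeping ($Z_k$ represents $k\zeta$) that the paper already needs in the proof of Theorem \ref{thmb}. Note also that both proofs require the same careful reading of the hypothesis $\|\omega(\gamma)\|_\infty = o(\mathrm{Length}(\gamma))$: the supports of the test cycles contain short loops as well as long ones, so what is really used is that the monotone envelope $g(\ell) = \sup_{\mathrm{Length}(\gamma)\le \ell}\|\omega(\gamma)\|_\infty$ is $o(\ell)$; you state this explicitly, while the paper leaves it implicit in the line $\sup_{\gamma \in \mathrm{supp}(\{L\}_*\xi)}\|\omega(\gamma)\|_\infty = o(L)$.
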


\begin{proof}
Suppose for contradiction we have such an $\omega$ with $||\omega(\gamma)||_{\infty} = o(\text{Length}(\gamma))$. Let $\xi$ be a cycle generating $H_{n-1}(\Omega S^n)$. Consider the map $\{L\}: \Omega S^n \to \Omega S^n$ sending $\gamma$ to $\gamma \cdot \ldots \cdot \gamma$; $\gamma$ to $\gamma$ concatenated with itself itself $L$ times. This is $1$-Lipschitz so the induced map on chains $\{L\}_*: C_*(\Omega S^n) \to C_*(\Omega S^n)$ does not increase volume. It does however increase the suplength of chains: for any chain $Z$, $\text{Suplength}(\{L\}_*Z) = L\cdot \text{Suplength}(Z)$. 

Note that $[\{L\}_*\xi] = L[\xi] \in H_{n-1}(\Omega S^n)$ since $\{L\}_*\xi$ sweeps out $S^n$ by loops $L$ times. 

Then $$L = \langle \omega, \{L\}_*\xi \rangle \leq \text{Vol}(\{L\}_*\xi) \cdot \sup_{\gamma \in \text{supp}(\{L\}_*\xi)} ||\omega(\gamma)||_{\infty} = o(L),$$ which is a contradiction.
\end{proof}

\begin{proposition}
Let $n$ be even and $\omega$ be any $(2n-2)$-form on $\Omega S^n$ generating $H^{2n-2}(\Omega S^n)$. Then it cannot be the case that $||\omega(\gamma)||_{\infty} = o(\text{Length}(\gamma)^2)$ for $\gamma \in \Omega S^n$.
\end{proposition}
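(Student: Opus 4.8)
The plan is to mimic the proof of the previous proposition, but to exploit the Pontryagin product on $H_*(\Omega S^n; \mathbb{R})$ in order to manufacture a family of cycles whose homology class grows like $L^2$ while their suplength grows only like $L$ and their volume stays bounded. So suppose for contradiction that $\omega$ generates $H^{2n-2}(\Omega S^n)$ and satisfies $||\omega(\gamma)||_{\infty} = o(\text{Length}(\gamma)^2)$. Recall that under the Pontryagin product induced by loop concatenation $\mu$, the ring $H_*(\Omega S^n; \mathbb{R})$ is the polynomial ring $\mathbb{R}[u]$ on a single generator $u$ of degree $n-1$; in particular $u^2$ generates $H_{2n-2}(\Omega S^n)$ (note that although $|u|$ is odd for $n$ even, $u^2 \neq 0$ since $\Omega S^n$ is not homotopy commutative). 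Let $\xi : S^{n-1} \to \Omega S^n$ be a sweepout with $[\xi] = u$, and let $\xi \cdot \xi : S^{n-1} \times S^{n-1} \to \Omega S^n$, $(x,y) \mapsto \xi(x) * \xi(y)$, be the associated product cycle, so that $[\xi \cdot \xi] = u^2$ generates $H_{2n-2}$ and hence $\langle \omega, \xi \cdot \xi \rangle = c \neq 0$ by nondegeneracy of the pairing.

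The crucial construction is to apply the $L$-fold concatenation map $\{L\}$ of Proposition 6.1 to \emph{each factor before multiplying}. Define $W_L : S^{n-1} \times S^{n-1} \to \Omega S^n$ by $(x,y) \mapsto \{L\}(\xi(x)) * \{L\}(\xi(y))$, i.e. $W_L = \mu \circ (\{L\} \times \{L\}) \circ (\xi \times \xi)$. As in Proposition 6.1 we have $[\{L\}_*\xi] = L u$ (it sweeps out $S^n$ by loops $L$ times), so taking the Pontryagin product gives $[W_L] = (Lu) \cdot (Lu) = L^2 u^2 = L^2 [\xi \cdot \xi]$. The point to emphasize, which is where I expect the main obstacle to lie, is that powering each factor \emph{separately} and then concatenating is what produces the quadratic gain: applying $\{L\}$ to the whole product cycle $\xi \cdot \xi$ would only scale the class by $L$, since $u^2$ is primitive and $\{L\}$ is a power map rather than a ring map. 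Getting the Pontryagin bookkeeping right (and confirming $[W_L] = L^2 u^2$) is the substantive step.

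The metric estimates are then routine. Since $\{L\}$ is $1$-Lipschitz and concatenation $\mu$ is $1$-Lipschitz for the sup metric (the distance between two concatenations is the maximum of the distances between corresponding halves), the composite $F_L := \mu \circ (\{L\} \times \{L\})$ is $1$-Lipschitz, and $W_L = F_L \circ (\xi \times \xi)$ is the image of the fixed cycle $\xi \times \xi$ under $F_L$. Hence $\text{Vol}(W_L) \leq \text{Vol}(\xi \times \xi) =: V_0$ is bounded independently of $L$. Each loop in the image of $W_L$ has length at most $L\,\text{Length}(\xi(x)) + L\,\text{Length}(\xi(y)) \leq 2 L L_0$, where $L_0 = \text{Suplength}(\xi)$, so $\text{Suplength}(W_L) \leq 2 L L_0$.

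Combining these with the integration-pairing bound (the second lemma of Section 3) yields $L^2 |c| = |\langle \omega, W_L \rangle| \leq \text{Vol}(W_L) \cdot \sup_{\gamma \in \text{supp}(W_L)} ||\omega(\gamma)||_{\infty} \leq V_0 \cdot o((2 L L_0)^2) = o(L^2)$, whence $|c| = o(1)$, contradicting $c \neq 0$. This completes the argument, with the only delicate ingredient being the homological computation $[W_L] = L^2[\xi\cdot\xi]$ described above.
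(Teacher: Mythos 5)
Your proof is correct and takes essentially the same approach as the paper: your $W_L$ is exactly the paper's cycle $\{L\}_*\xi \cdot \{L\}_*\xi$, with $[W_L] = L^2[\xi\cdot\xi]$ by bilinearity of the Pontryagin product, bounded volume and suplength $\lesssim L$ from the $1$-Lipschitz maps, and the same pairing contradiction. Your side remarks --- that $u^2 \neq 0$ despite $|u|$ being odd, and that one must power each factor \emph{before} concatenating since $u^2$ is primitive and $\{L\}$ is only a power map --- are accurate and make explicit what the paper leaves implicit.
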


\begin{proof} 
Let $\cdot$ denote the Pontryagin product on chains in $\Omega X$. Then $\xi \cdot \xi$ is a $(2n-2)$-cycle generating $H_{2n-2}(\Omega S^n)$. We proceed as before, this time using the family of cycles $\{L\}_*\xi \cdot \{L\}_*\xi$. By linearity of the Pontryagin product, $[\{L\}_*\xi \cdot \{L\}_*\xi] = L^2[\xi \cdot \xi] \in H_{2n-2}(\Omega S^n)$.

Suppose we have such an $\omega$ with $||\omega(\gamma)||_{\infty} = o(\text{Length}(\gamma)^2)$.
Then $$L^2 \lesssim \langle \omega, \{L\}_*\xi \cdot  \{L\}_*\xi \rangle \leq \text{Vol}(\{L\}_*\xi \cdot  \{L\}_*\xi) \cdot \sup_{\gamma \in \text{supp}(\{L\}_*\xi \cdot  \{L\}_*\xi)} ||\omega(\gamma)||_{\infty} = o(L^2),$$ a contradiction.
\end{proof}

The crucial properties of the families of cycles in the preceding two propositions were that they were $\text{suplength} \lesssim L$ and the ratio of their degree in homology to their volume was large (linear in $L$ and quadratic in $L$ respectively).

We can study this phenomenon in more generality. 

\begin{definition}
Let $X$ be a compact simply connected Riemannian manifold and $\alpha \in H_n(\Omega X; \mathbb{Q})$.  The \textit{homological distortion} of $\alpha$ is
\begin{align*}
\delta_{\alpha}'(L) \coloneqq \text{max}\{k \; : \; & \text{there exists a cycle representing $k\alpha$ that is} \\ & \text{suplength $\leq L$ and volume $\leq L^n$}\}.
\end{align*}
\end{definition}

The bound on the volume can always be satisfied by rescaling the coefficients of a cycle. The volume bound of $L^n$ is so that homological distortion can be compared to distortion:

\begin{proposition} For any $\alpha \in \pi_{n+1}(X) \otimes \mathbb{Q}$ with Hurewicz image $\tau(\alpha) \in H_{n}(\Omega X; \mathbb{Q})$, $\delta_{\alpha}(L) \lesssim \delta_{\tau(\alpha)}'(L)$. 
\end{proposition}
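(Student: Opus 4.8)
The plan is to convert any Lipschitz map witnessing the distortion of $\alpha$ into a small cycle witnessing the homological distortion of $\tau(\alpha)$, reusing the chain-level refinement of $\tau$ introduced in the proof of Theorem \ref{thmb}. Suppose $f \colon S^{n+1} \to X$ is an $L$-Lipschitz map with $[f] = k\alpha$ in $\pi_{n+1}(X) \otimes \mathbb{Q}$, so that $k$ is a value attained in the supremum defining $\delta_{\alpha}(L)$. Fix once and for all a cycle $\zeta$ generating $H_n(\Omega S^{n+1})$; we may take $\zeta = \eta_*[S^n]$ for the adjunction unit $\eta \colon S^n \to \Omega S^{n+1}$, since this is the Hurewicz image of the adjoint of the identity and hence a generator. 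Say $\zeta$ has suplength $L_0$ and volume $V_0$. The refined map sends $f$ to the cycle $(\Omega f)_*\zeta$ in $C_n(\Omega X)$, and I claim this represents $\tau([f]) = k\,\tau(\alpha)$ in $H_n(\Omega X; \mathbb{Q})$.

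Next I would control the geometry of this cycle. The induced map $\Omega f \colon \Omega S^{n+1} \to \Omega X$ is $L$-Lipschitz for the sup metric on loop spaces, since $d(\Omega f(\gamma_1), \Omega f(\gamma_2)) = \sup_t d_X(f\gamma_1(t), f\gamma_2(t)) \leq L \sup_t d_X(\gamma_1(t), \gamma_2(t))$. An $L$-Lipschitz map multiplies the length of each loop by at most $L$ and scales $n$-dimensional Hausdorff measure by at most $L^n$, so $(\Omega f)_*\zeta$ has suplength at most $L L_0$ and volume at most $L^n V_0$. Setting $C := \max\{L_0, V_0^{1/n}\}$, this cycle has suplength $\leq CL$ and volume $\leq (CL)^n$, hence is admissible in the definition of $\delta'_{\tau(\alpha)}$ at scale $CL$. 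Since it represents $k\,\tau(\alpha)$, we get $k \leq \delta'_{\tau(\alpha)}(CL)$. Taking the supremum over all such $f$ and $k$ yields $\delta_{\alpha}(L) \leq \delta'_{\tau(\alpha)}(CL)$, which is exactly the claimed bound $\delta_{\alpha}(L) \lesssim \delta'_{\tau(\alpha)}(L)$, the constant rescaling of the argument being absorbed by $\lesssim$.

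The main obstacle is the homological bookkeeping claimed at the end of the first paragraph: checking that $(\Omega f)_*\zeta$ genuinely represents $k\,\tau(\alpha)$, rather than merely some cycle of the correct degree. With the choice $\zeta = \eta_*[S^n]$ we have $(\Omega f)_*\zeta = \hat{f}_*[S^n]$, where $\hat{f} = (\Omega f)\circ \eta$ is the adjoint of $f$, so that $(\Omega f)_*\zeta$ is precisely the Hurewicz image of $[\hat{f}]$; naturality of the Hurewicz map together with the suspension-loop adjunction then identifies its class with $\tau([f])$, and rationally $\tau([f]) = \tau(k\alpha) = k\,\tau(\alpha)$. The only remaining point to handle with care is the passage from $\delta'_{\tau(\alpha)}(CL)$ to the asserted relation with $\delta'_{\tau(\alpha)}(L)$, but since a constant rescaling of the scale $L$ does not affect the asymptotic growth rate, this is exactly what the relation $\lesssim$ is designed to tolerate.
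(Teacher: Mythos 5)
Your proposal is correct and follows essentially the same route as the paper: push forward a fixed generating cycle $\zeta$ of $H_n(\Omega S^{n+1})$ along $\Omega f$ to get a cycle representing $k\,\tau(\alpha)$ with suplength $\lesssim L$ and volume $\lesssim L^n$. The paper states this in one line; you have simply supplied the verifications (Lipschitz bound for $\Omega f$, Hurewicz/adjunction naturality, and absorption of the constant into $\lesssim$) that the paper leaves implicit.
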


\begin{proof}
An $L$-Lipschitz map $f: S^{n+1} \to X$ with $[f] = k\alpha$, gives a cycle $\tau(f) \coloneqq (\Omega f)_*(\zeta_{n+1}) \in C_n(\Omega X)$ with volume $\lesssim L^n$, suplength $\lesssim L$ and $[\tau(f)] = k \tau(\alpha)$.
\end{proof}

Note that Corollary 4.3 really gives an upper bound on homological distortion which by Proposition 6.3 implies the upper bound on distortion. It is an open question as to whether there is an $\alpha$ such that $\delta_{\tau(\alpha)}'(L)$ grows strictly faster than $\delta_{\alpha}(L)$.

\printbibliography

\medskip

\textsc{Department of Mathematics, Massachusetts Institute of Technology, Cambridge, MA, United States} \\
E-mail address: \texttt{relliott@mit.edu}

\end{document}